\newtheorem{theorem}{Theorem}[section]
\newtheorem{recalltheorem}{Theorem}
\newtheorem{lemma}[theorem]{Lemma}
\newtheorem{proposition}[theorem]{Proposition}
\newtheorem{corollary}[theorem]{Corollary}
\theoremstyle{definition}
\newtheorem{example}[theorem]{Example}
\theoremstyle{remark}
\newtheorem{rem}[theorem]{Remark}
\numberwithin{equation}{section}
\newcommand{\C}{\mathbb{C}}
\newcommand{\I}{\mathbb{I}}
\newcommand{\T}{\mathbb{T}}
\newcommand{\Z}{\mathbb{Z}}
\newcommand{\PP}{\mathbb{P}}
\newcommand{\cA}{\mathcal{A}}
\newcommand{\cO}{\mathcal{O}}
\newcommand{\cZ}{\mathcal{Z}}
\newcommand{\im}{\operatorname{im}}
\newcommand{\Span}{\operatorname{Span}}
\newcommand{\del}{\partial}
\newcommand{\delbar}{\bar{\del}}
\newcommand{\blank}{\underline{\quad}}
\begin{document}

\date{\today}

\title{Complex symplectic structures and the $\del\delbar$-lemma}

\author{Andrea Cattaneo}
\address{Dipartimento di Scienza e Alta Tecnologia\\
Universit\`a degli Studi dell'Insubria\\
Via Valleggio, 11, 22100\\
Como, Italy}
\email{andrea1.cattaneo@uninsubria.it}

\author{Adriano Tomassini}
\address{Dipartimento di Scienze Matematiche, Fisiche, ed Informatiche\\
Plesso Matematico e Informatico\\
Universit\`a di Parma\\
Parco Area delle Scienze, 53/A, 43124\\
Parma, Italy}
\email{adriano.tomassini@unipr.it}

\thanks{The first author is granted with a research fellowship by Istituto Nazionale di Alta Matematica INdAM, and is supported by the Project PRIN ``Variet\`a reali e complesse: geometria, topologia e analisi armonica'', by the Project FIRB ``Geometria Differenziale e Teoria Geometrica delle Funzioni'', and by GNSAGA of INdAM. The second author is supported by the Project PRIN ``Variet\`a reali e complesse: geometria, topologia e analisi armonica'' and by GNSAGA of INdAM}
\keywords{Complex symplectic; $\partial\overline{\partial}$-lemma; Beauville--Bogomolov--Fujiki form.}
\subjclass[2010]{53C15, 53C56, 32C35.}

\begin{abstract}
In this paper we study complex symplectic manifolds, i.e., compact complex manifolds $X$ which admit a holomorphic $(2, 0)$-form $\sigma$ which is $d$-closed and non-degenerate, and in particular the Beauville--Bogomolov--Fujiki quadric $Q_\sigma$ associated to them. We will show that if $X$ satisfies the $\del \delbar$-lemma, then $Q_\sigma$ is smooth if and only if $h^{2, 0}(X) = 1$ and is irreducible if and only if $h^{1, 1}(X) > 0$.
\end{abstract}

\maketitle

\section*{Introduction}

Let $M$ be a compact real manifold, endowed with both a complex structure $I$ and a compatible Riemannian metric $g$. Then the $2$-form associated to $(M, I, g)$, namely $\omega_g(\blank, \blank) = g(I\blank ,\blank)$, can be seen as a collection of non-degenerate $2$-forms on each real tangent space to $M$, varying smoothly with the point. If $\omega_g$ is $d$-closed, then the triple $(M, I, \omega_g)$ is a K\"ahler manifold.

Let now $X$ be a compact \emph{complex} manifold, and assume we have a collection of non-degenerate $2$-forms, one on each holomorphic tangent space, varying holomorphically with the point. This is then a $(2, 0)$-form on $X$, and under the further assumption that it is $d$-closed, it is called a \emph{complex symplectic structure} (cf.\ \cite[p.\ 763]{bea} and \cite{boalch}).

In the context of K\"ahler geometry, such kind of manifolds plays an important role: by the Bogomolov covering theorem, any compact K\"ahler manifold with vanishing first Chern class has a covering which splits as the product of Calabi--Yau manifolds, complex tori and irreducible holomorphic symplectic manifolds. Among these, the last two are in fact complex symplectic manifolds. In particular, an interesting and crucial tool for the study of the latter is the \emph{Beauville--Bogomolov--Fujiki quadratic form} introduced in \cite{bea} (see \eqref{eq: bbf} for the definition).

The aim of this paper is to study cohomological properties of compact complex (possibly non-K\"ahler) manifolds $X$ endowed with a complex symplectic structure $\sigma$. We weaken the K\"ahler assumption requiring that $X$ satisfies the $\del \delbar$-lemma (cf.\ \cite{dgms}). Observing that the Beauville--Bogomolov--Fujiki form $q_\sigma$ makes sense also in this setting, we will show that if $X$ satisfies the $\del \delbar$-lemma, then there is a deep link between the geometric properties of the quadric $Q_\sigma$ defined in $\PP(H^2_{dR}(X, \C))$ by the vanishing of $q_\sigma$ and the Dolbeault cohomology of $X$: the smoothness of $Q_\sigma$ depends on the Dolbeault spaces $H^{2, 0}_{\delbar}(X)$ and $H^{0, 2}_{\delbar}(X)$, while its irreducibility depends on $H^{1, 1}_{\delbar}(X)$.

The structure of the paper is as follows. In Section \ref{sect: preliminaries} we set up the notation and recall the basic facts we will use throughout the paper. Section \ref{sect: bbf form} is devoted to the proof of the two main theorems:

\begin{recalltheorem}[{cf.\ Thm.\ \ref{thm: irreducible}}]
Let $(X, \sigma)$ be a complex symplectic manifold, and assume that $X$ satisfies the $\del \delbar$-lemma. Let $Q_\sigma$ be the quadric in $\PP(H^2_{dR}(X, \C))$ defined by \eqref{eq: bbf}. Then the following are equivalent:
\begin{enumerate}
\item $h^{1, 1}(X) > 0$;
\item the quadric $Q_\sigma$ is irreducible.
\end{enumerate}

\end{recalltheorem}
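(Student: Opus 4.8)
The plan is to convert the geometric statement into a rank computation for the quadratic form $q_\sigma$, and then to extract that rank from the non-degeneracy of $\sigma$ via two Lefschetz-type isomorphisms. First I would record the consequences of the $\del\delbar$-lemma that I need: the Hodge decomposition $H^2_{dR}(X,\C) = H^{2,0}_{\delbar}(X)\oplus H^{1,1}_{\delbar}(X)\oplus H^{0,2}_{\delbar}(X)$, the conjugation symmetry $\overline{H^{2,0}_{\delbar}(X)} = H^{0,2}_{\delbar}(X)$ (so $h^{0,2}=h^{2,0}$ and $\dim_\C H^2_{dR}(X,\C)=2h^{2,0}+h^{1,1}$), and the fact that every class of $H^{2,0}_{\delbar}(X)$ is represented by a holomorphic $(2,0)$-form. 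On the projective side I would invoke the elementary classification: a quadric in $\PP(H^2_{dR}(X,\C))$ cut out by $q_\sigma$ is irreducible exactly when $\rk q_\sigma\ge 3$, is a pair of distinct hyperplanes when $\rk q_\sigma=2$, and a double hyperplane when $\rk q_\sigma=1$. Thus the theorem is equivalent to the assertion that $\rk q_\sigma\ge 3$ if and only if $h^{1,1}(X)>0$.

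The next step is to block-diagonalize $q_\sigma$ along the Hodge decomposition. Matching bidegrees in each integrand of \eqref{eq: bbf} (using that $\sigma^{n-1}\bar\sigma^{n-1}$ has bidegree $(2n-2,2n-2)$), the mixed pairings between $H^{1,1}_{\delbar}(X)$ and $H^{2,0}_{\delbar}(X)\oplus H^{0,2}_{\delbar}(X)$ vanish for type reasons. Hence $q_\sigma=q_1\oplus q_2$ splits orthogonally, where $q_1$ is the restriction to $H^{1,1}_{\delbar}(X)$ and $q_2$ the restriction to $H^{2,0}_{\delbar}(X)\oplus H^{0,2}_{\delbar}(X)$, and so $\rk q_\sigma=\rk q_1+\rk q_2$.

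Third, I would show that $q_2$ always has rank exactly $2$. Its off-diagonal block is the pairing $B(\beta,\gamma)$ between a holomorphic $2$-form $\beta$ and an antiholomorphic $2$-form $\gamma$. Writing the pointwise symplectic Lefschetz decomposition $\beta=\lambda_\beta\,\sigma+\beta_0$ with $\beta_0$ primitive (so $\beta_0\wedge\sigma^{n-1}=0$), the coefficient $\lambda_\beta$ is, up to a constant, the symplectic trace $\Lambda_\sigma\beta$, hence a holomorphic function on the compact connected manifold $X$, hence a constant; the same holds for $\gamma$. A direct computation then shows that both summands of \eqref{eq: bbf} depend on $\beta,\gamma$ only through the constants $\lambda_\beta,\mu_\gamma$, so $B$ factors through the rank-one map $\beta\mapsto\lambda_\beta$. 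Since the normalization in \eqref{eq: bbf} forces $B(\sigma,\bar\sigma)\neq 0$, the pairing $B$ has rank exactly one and $q_2$ has rank $2$, independently of $h^{2,0}(X)$.

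Finally, and this is the heart of the argument, I would prove that $q_1$ is non-degenerate, so that $\rk q_1=h^{1,1}(X)$. The key is that the non-degeneracy of $\sigma$ produces a holomorphic bundle isomorphism $\Omega^1_X\xrightarrow{\,\wedge\sigma^{n-1}\,}\Omega^{2n-1}_X$ (the pointwise symplectic Lefschetz isomorphism $\wedge^1\to\wedge^{2n-1}$), and likewise $\wedge\bar\sigma^{n-1}$ gives an antiholomorphic isomorphism. Passing to Dolbeault cohomology, their composite realizes $\wedge(\sigma\bar\sigma)^{n-1}\colon H^{1,1}_{\delbar}(X)\to H^{2n-1,2n-1}_{\delbar}(X)$ as an isomorphism. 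As $q_1(\alpha,\beta)=\int_X\alpha\wedge\beta\wedge(\sigma\bar\sigma)^{n-1}$ is the Serre pairing of $\alpha$ with $\wedge(\sigma\bar\sigma)^{n-1}\beta$, and that pairing is perfect (using $h^{1,1}=h^{2n-1,2n-1}$, again from the $\del\delbar$-lemma), the form $q_1$ is non-degenerate. Combining the three steps gives $\rk q_\sigma=2+h^{1,1}(X)$, which is $\ge 3$ precisely when $h^{1,1}(X)>0$; this yields both implications at once. I expect the genuine obstacle to be this last step: checking that the pointwise symplectic Lefschetz isomorphisms descend to isomorphisms on $\delbar$-cohomology (a hard-Lefschetz statement for the non-K\"ahler operator $\wedge(\sigma\bar\sigma)^{n-1}$), for which the holomorphic/antiholomorphic factorization through $\Omega^1_X\cong\Omega^{2n-1}_X$ together with Serre duality is essential.
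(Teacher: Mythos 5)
Your proposal is correct and is essentially the paper's own proof: the reduction of irreducibility to $\rk q_\sigma \ge 3$, the orthogonal splitting of $q_\sigma$ by type, the rank-$2$ computation on $H^{2,0}_{\delbar}(X)\oplus H^{0,2}_{\delbar}(X)$ (your coefficients $\lambda_\beta,\mu_\gamma$ are exactly the paper's two linear functionals $[\tau_{2,0}]\mapsto\int_X\sigma^{n-1}\bar{\sigma}^n\tau_{2,0}$ and $[\tau_{0,2}]\mapsto\int_X\sigma^n\bar{\sigma}^{n-1}\tau_{0,2}$, whose kernels span the degenerate subspace $W$), and the non-degeneracy of the restriction to $H^{1,1}_{\delbar}(X)$ via the bundle isomorphism of Lemma \ref{lemma: iso vbund} and Serre duality (Lemmas \ref{lemma: iso (1, 1)} and \ref{lemma: non-degenerate}); your conclusion $\rk q_\sigma = 2 + h^{1,1}(X)$ is precisely the orthogonal decomposition of Proposition \ref{prop: decomposition}.

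One correction on the step you rightly single out as the crux. The holomorphic map $L_\sigma^{n-1}$ descends to an isomorphism $H^{1,q}_{\delbar}(X)\to H^{2n-1,q}_{\delbar}(X)$ by functoriality of sheaf cohomology (Corollary \ref{cor: symplectic lefschetz}), but the antiholomorphic factor $L_{\bar{\sigma}}^{n-1}$ does \emph{not} descend to an isomorphism by any bundle-level or Serre-duality argument: the paper (Lemma \ref{lemma: iso (1, 1)}) obtains it from the identity $L_{\bar{\sigma}}^{n-1}([\alpha])=\overline{L_{\sigma}^{n-1}([\bar{\alpha}])}$ together with the fact that complex conjugation is a well-defined isomorphism of Dolbeault cohomology, and it is exactly this conjugation symmetry --- not the perfectness of the Serre pairing, which holds unconditionally, nor the equality $h^{1,1}=h^{2n-1,2n-1}$ --- that requires the $\del\delbar$-lemma. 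That this hypothesis is genuinely needed is the content of Example \ref{ex: no iso del delbar}, where on a non-$\del\delbar$ manifold the operator $L_{\bar{\sigma}}$ on $H^{1,1}_{\delbar}$ acquires a four-dimensional kernel; so when you write up the argument, this is the point to make explicit.
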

\begin{recalltheorem}[{cf.\ Thm.\ \ref{thm: smooth}}]
Let $(X, \sigma)$ be a complex symplectic manifold, and assume that $X$ satisfies the $\del \delbar$-lemma. Let $Q_\sigma$ be the quadric in $\PP(H^2_{dR}(X, \C))$ defined by \eqref{eq: bbf}. Then the following are equivalent:
\begin{enumerate}
\item $q_\sigma$ is non-degenerate;
\item $h^{2, 0}(X) = 1$, i.e., $H^{2, 0}_{\delbar}(X) = \C \cdot [\sigma]$;
\item the quadric $Q_\sigma$ is smooth.
\end{enumerate}
\end{recalltheorem}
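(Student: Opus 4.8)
The plan is to understand the structure of the Beauville–Bogomolov–Fujiki form $q_\sigma$ on $H^2_{dR}(X,\C)$ and to relate its algebraic properties (non-degeneracy, rank, structure of the associated quadric) to the Hodge numbers $h^{2,0}$, $h^{1,1}$, $h^{0,2}$. Under the $\del\delbar$-lemma hypothesis, I expect that $H^2_{dR}(X,\C)$ carries a Hodge decomposition $H^{2,0}_{\delbar}(X)\oplus H^{1,1}_{\delbar}(X)\oplus H^{0,2}_{\delbar}(X)$ with the usual conjugation symmetry (so $h^{2,0}=h^{0,2}$), and that $[\sigma]\in H^{2,0}_{\delbar}(X)$ is a nonzero class. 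The key first step is therefore to write down explicitly how $q_\sigma$ pairs the three Hodge pieces against one another, using the definition in \eqref{eq:~bbf}. I anticipate that $q_\sigma$ restricted to $H^{2,0}\oplus H^{0,2}$ is essentially the intersection pairing against powers of $\sigma$ and $\bar\sigma$, and that the $(1,1)$-part is paired with itself (possibly degenerately) while being orthogonal, or nearly orthogonal, to the $(2,0)\oplus(0,2)$ part. Establishing these block-decomposition formulas is the technical heart of the argument.

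Granting that block structure, the equivalences should follow by linear algebra on the associated symmetric bilinear form. For the smoothness theorem specifically, I would use the standard fact that a projective quadric $Q_\sigma = \{q_\sigma = 0\}$ is smooth precisely when the symmetric form $q_\sigma$ is non-degenerate; this gives the equivalence $(1)\Leftrightarrow(3)$ almost for free, so the real content is $(2)\Leftrightarrow(1)$. To prove $(2)\Rightarrow(1)$, I would show that when $h^{2,0}=1$ the $(2,0)$ and $(0,2)$ blocks each become one-dimensional and pair non-trivially (the class $[\sigma]$ pairs non-trivially with $[\bar\sigma]$, since $\int_X \sigma^{n}\wedge\bar\sigma^{n}\neq 0$ by non-degeneracy of $\sigma$), and that the $(1,1)$-block contributes a non-degenerate summand as well, so the whole form is non-degenerate.

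For the converse $(1)\Rightarrow(2)$, I expect to argue contrapositively: if $h^{2,0}>1$, then $H^{2,0}_{\delbar}(X)$ contains a class $[\alpha]$ independent from $[\sigma]$, and I would exhibit an explicit vector in the radical of $q_\sigma$ (for instance a suitable combination in the $(2,0)\oplus(0,2)$ part killed by the pairing), thereby forcing $q_\sigma$ to be degenerate and contradicting $(1)$. The geometric mechanism here should mirror the classical Beauville computation: in the irreducible holomorphic symplectic case $h^{2,0}=1$ exactly, and the extra $(2,0)$-classes in the general setting are precisely what produce null directions orthogonal to everything.

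The main obstacle I foresee is the first paragraph—deriving the precise pairing formulas for $q_\sigma$ between the Hodge components, and in particular verifying that the mixed pairings (e.g. between a $(2,0)$-class other than $[\sigma]$ and the $(1,1)$- or $(0,2)$-parts) vanish or are controlled, so that the form genuinely block-decomposes. This requires a careful use of the $\del\delbar$-lemma to ensure that Dolbeault, Bott–Chern, and de Rham representatives can be chosen compatibly, and that integration-by-parts arguments (which normally rely on Stokes' theorem together with $d$-closedness of $\sigma$) go through in the non-Kähler setting. Once the pairing is pinned down, both theorems reduce to the spectral/rank analysis of a symmetric bilinear form, which is routine; I would handle the irreducibility theorem (Thm.~\ref{thm:~irreducible}) by the standard fact that a quadric of rank $\ge 3$ is irreducible, translating the condition $h^{1,1}>0$ into a lower bound on the rank of $q_\sigma$ via the same block decomposition.
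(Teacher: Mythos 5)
Your skeleton is the same as the paper's --- Hodge decomposition of $H^2_{dR}(X,\C)$ via the $\del\delbar$-lemma, block-diagonalization of $q_\sigma$, identification of the radical with the ``extra'' $(2,0)$- and $(0,2)$-classes, and the standard fact that a projective quadric is smooth iff its defining form is non-degenerate --- but there is a genuine gap, and you have misplaced the difficulty. You present the block orthogonality (the mixed pairings between Hodge pieces) as the technical heart; in fact that part is immediate from type considerations, since every mixed term in the polarization of \eqref{eq: bbf} pairing an $H^{1,1}_{\delbar}(X)$-class against a $(2,0)$- or $(0,2)$-class contains an integrand whose type is not $(2n,2n)$, hence vanishes (this is exactly the ``easy computation'' half of Proposition \ref{prop: (1, 1) perp}). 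The step you wave through as routine --- ``the $(1,1)$-block contributes a non-degenerate summand as well'' --- is the actual content of the theorem, and it is not linear algebra. The paper proves it (Lemma \ref{lemma: non-degenerate}) by a chain of geometric inputs: wedging with $\sigma^{n-1}$ is an isomorphism of vector bundles $\Omega^1_X \to \Omega^{2n-1}_X$ (Lemma \ref{lemma: iso vbund}), hence an isomorphism $H^{1,q}_{\delbar}(X)\to H^{2n-1,q}_{\delbar}(X)$ (Corollary \ref{cor: symplectic lefschetz}); conjugating this --- and this is precisely where the $\del\delbar$-lemma is consumed --- gives that $L_{\sigma\bar{\sigma}}^{n-1}\colon H^{1,1}_{\delbar}(X)\to H^{2n-1,2n-1}_{\delbar}(X)$ is an isomorphism (Lemma \ref{lemma: iso (1, 1)}); Serre duality then converts this into non-degeneracy of $\langle\blank,\blank\rangle_\sigma$ on $H^{1,1}_{\delbar}(X)$. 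Without this your implication $(2)\Rightarrow(1)$ does not close: $h^{2,0}(X)=1$ makes the form hyperbolic on $\Span\{[\sigma],[\bar{\sigma}]\}$, but a kernel vector could still hide inside $H^{1,1}_{\delbar}(X)$. That this danger is real, not hypothetical, is shown by Example \ref{ex: no iso del delbar}: on the Iwasawa threefold times a torus (no $\del\delbar$-lemma) the conjugate Lefschetz operator $L_{\bar{\sigma}}$ has nontrivial kernel and $q_\sigma$ is degenerate on the image of $H^{1,1}_{BC}(X)$.

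A second, smaller omission occurs in your contrapositive $(1)\Rightarrow(2)$: to place a class $[\tau_{2,0}]$ with $\int_X \sigma^{n-1}\bar{\sigma}^n\tau_{2,0}=0$ in the radical, you must check it is orthogonal not only to $[\sigma]$, $[\bar{\sigma}]$ and to $H^{1,1}_{\delbar}(X)$ (types), but also to every $(0,2)$-class, i.e.\ that $\int_X(\sigma\bar{\sigma})^{n-1}\tau_{2,0}\,\tau_{0,2}=0$ for all $[\tau_{0,2}]$. The paper's mechanism (inside Proposition \ref{prop: decomposition}) is that $H^{2n,0}_{\delbar}(X)=\C\cdot[\sigma^n]$ and $H^{0,2n}_{\delbar}(X)=\C\cdot[\bar{\sigma}^n]$ with non-degenerate Serre pairing, so the vanishing of $\int_X \sigma^{n-1}\bar{\sigma}^n\tau_{2,0}$ forces $[\sigma^{n-1}\tau_{2,0}]=0$, which kills the mixed term. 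Your ``suitable combination killed by the pairing'' gestures at the right object but does not supply this argument; with these two inputs added, your outline becomes the paper's proof.
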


We collect through Section \ref{sect: bbf form} all the results we need for the proofs of the two main theorems and which are interesting also on their own; in particular Corollary \ref{cor: symplectic lefschetz} can be viewed as a sort of Lefschetz Theorem for complex symplectic manifolds. In Section \ref{sect: cones} we briefly study some property of the set
\[\{ [\sigma] \in H^{2, 0}_{\delbar}(X) | \sigma \text{ is a complex symplectic form} \} \subseteq H^{2, 0}_{\delbar}(X).\]
Finally, Section \ref{sect: examples} contains two examples: with the first we want to show the importance of the assumption that the $\del \delbar$-lemma holds for the manifolds we are dealing with, and with the second we want to show explicitly the results of the paper.

{\bf Acknowledgement:} the authors would like to thank Daniele Angella for useful comments. They want also thank the referee for his suitable suggestions.

%\tableofcontents

\section{Preliminaries}\label{sect: preliminaries}

In this Section we want to recall the basic definitions and properties we will use in the sequel.

Let $X$ be a compact complex manifold and denote by $\cA^{p, q}(X)$ the space of smooth $(p, q)$-forms on $X$. According to Deligne, Griffiths, Morgan and Sullivan \cite{dgms}, $X$ is said to satisfy the $\del \delbar$-lemma if
\[\ker \del \cap \ker \delbar \cap \im d = \im \del \delbar.\]

The manifolds $X$ satisfying the $\del \delbar$-lemma (e.g., the K\"ahler or Moishezon manifolds as observed in \cite[Cor.\ 5.23]{dgms}) have interesting cohomological properties, which we will briefly sketch. Denote by
\[H^{p, q}_{BC}(X) = \frac{\ker (d: \cA^{p, q}(X) \longrightarrow \cA^{p + 1, q}(X) \oplus \cA^{p, q + 1}(X))}{\im (\del \delbar: \cA^{p - 1, q - 1}(X) \longrightarrow \cA^{p, q}(X))}\]
the \emph{Bott--Chern cohomology} of $X$, and by
\[H^{p, q}_{\delbar}(X) = \frac{\ker (\delbar: \cA^{p, q}(X) \longrightarrow \cA^{p, q + 1}(X))}{\im (\delbar: \cA^{p, q - 1}(X) \longrightarrow \cA^{p, q}(X))}\]
the \emph{Dolbeault cohomology} of $X$.

Then we have a natural homomorphism
\begin{equation}\label{eq: del-delbar iso}
\begin{array}{ccc}
H^{p, q}_{BC}(X) & \longrightarrow & H^{p, q}_{\delbar}(X)\\
{[\alpha]_{BC}} & \longmapsto & [\alpha]_{\delbar},
\end{array}
\end{equation}
which is an isomorphism if and only if the $\del \delbar$-lemma holds for $X$ (cf.\ \cite[Remark 5.16]{dgms}).

Two consequences of this fact (cf.\ \cite{dgms}) are that:
\begin{enumerate}
\item we have a decomposition
\[H^k(X, \C) = \bigoplus_{p + q = k} H^{p, q}_{\delbar}(X);\]
\item complex conjugation gives an isomorphism
\[\begin{array}{ccc}
H^{p, q}_{\delbar}(X) & \longrightarrow & H^{q, p}_{\delbar}(X)\\
{[\alpha]_{\delbar}} & \longmapsto & \overline{[\alpha]_{\delbar}} = [\bar{\alpha}]_{\delbar}.
\end{array}\]
\end{enumerate}

Let $(X, \sigma)$ be a \emph{complex symplectic manifold}, namely $X$ is an even dimensional (connected) compact complex manifold, and $\sigma$ is a $d$-closed $(2, 0)$-form which is non-degenerate at any point (cf.\ \cite[Def.\ 3.3]{boalch}). Observe that $\sigma$ is automatically holomorphic and that as direct consequences $X$ has trivial canonical bundle and $\dim H^{2, 0}_{\delbar}(X) \geq 1$.

To fix the notation, $(X, \sigma)$ will denote a compact complex symplectic manifold, $\dim_\C X = 2n$, and we will always assume that $\sigma$ is normalized, that is
\begin{equation}\label{eq: normalization}
\int_X (\sigma \bar{\sigma})^n = 1.
\end{equation}

\begin{rem}
Observe that our normalization assumption \eqref{eq: normalization} is not restrictive. In fact, as $\sigma^n$ is a nowhere vanishing canonical section, we have that $\dim H^{2n, 0}_{\delbar}(X) = 1$ and that $[\sigma^n]_{\delbar}$ is a generator for this space. By Serre duality $H^{0, 2n}_{\delbar}(X)$ is $1$-dimensional as well, and it is generated by $[\bar{\sigma}^n]_{\delbar}$ since
\begin{enumerate}
\item $\bar{\sigma}$ is a $d$-closed $(0, 2)$-form, and so it is also $\delbar$-closed;
\item $\bar{\sigma}^n$ is nowhere vanishing as $\sigma^n$ has the same property.
\end{enumerate}
As a consequence
\[0 \neq [\sigma^n]_{\delbar} \cup [\bar{\sigma}^n]_{\delbar} = \int_X (\sigma \bar{\sigma})^n\]
and so we can always assume this integral to be $1$.
\end{rem}

The \emph{Beauville--Bogomolov--Fujiki quadratic form} $q_\sigma$ on $H^2(X, \C)$ is defined, as in the hyperk\"ahler case (cf.\ \cite[$\S$8]{bea}), as follows: for each $[\alpha] \in H^2(X, \C)$ set
\begin{equation}\label{eq: bbf}
q_\sigma([\alpha]) = \frac{n}{2} \int_X (\sigma \bar{\sigma})^{n - 1} \alpha^2 + (1 - n) \left( \int_X \sigma^{n - 1} \bar{\sigma}^n \alpha \right) \left( \int_X \sigma^n \bar{\sigma}^{n - 1} \alpha \right).
\end{equation}

We will denote by $\langle \blank, \blank \rangle_\sigma$ the symmetric bilinear form polar to $q_\sigma$, namely
\[\langle [\alpha], [\beta] \rangle_\sigma = \frac{1}{2}(q_\sigma([\alpha] + [\beta]) - q_\sigma([\alpha]) - q_\sigma([\beta])).\]

Since $q_\sigma$ is homogeneous of degree $2$, we can consider the quadric $Q_\sigma$ defined by the equation $q_{\sigma} = 0$ in $\PP(H^2(X, \C))$. In the next Section we will mainly be concerned with the study of this quadric.

\section{The Beauville--Bogomolov--Fujiki quadric and the \texorpdfstring{$\del \delbar$-lemma}{del-delbar-lemma}}\label{sect: bbf form}

The purpose of this Section is to show the following results.

\begin{theorem}\label{thm: irreducible}
Let $(X, \sigma)$ be a complex symplectic manifold, and assume that $X$ satisfies the $\del \delbar$-lemma. Then the following are equivalent:
\begin{enumerate}
\item $h^{1, 1}(X) > 0$;
\item the quadric $Q_\sigma$ is irreducible.
\end{enumerate}
\end{theorem}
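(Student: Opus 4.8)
The plan is to understand the quadric $Q_\sigma \subseteq \PP(H^2(X,\C))$ through the structure of the Beauville--Bogomolov--Fujiki form $q_\sigma$, exploiting the Hodge-type decomposition $H^2(X,\C) = H^{2,0}_{\delbar}(X) \oplus H^{1,1}_{\delbar}(X) \oplus H^{0,2}_{\delbar}(X)$ that the $\del\delbar$-lemma provides. The key elementary fact about quadrics is that a quadric hypersurface $\{q=0\}$ in a projective space is reducible precisely when the quadratic form $q$ has rank $\le 2$ (it then factors as a product of two linear forms, giving a union of two hyperplanes), and is irreducible when $\rk q \ge 3$. So I would reduce both implications of the theorem to a statement about the rank of $q_\sigma$: namely, $h^{1,1}(X) > 0$ if and only if $\rk q_\sigma \ge 3$.

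First I would compute $q_\sigma$ as explicitly as possible on each Hodge summand, using the defining formula \eqref{eq: bbf} together with the normalization \eqref{eq: normalization}. The integrals $\int_X \sigma^{n-1}\bar\sigma^n\,\alpha$ and $\int_X \sigma^n\bar\sigma^{n-1}\,\alpha$ appearing in the second term are linear functionals on $H^2$; by type considerations (and Serre duality, as in the normalization remark) the first is supported on the $(0,2)$-component of $\alpha$ and the second on the $(2,0)$-component, so the correction term couples only $H^{2,0}_{\delbar}$ with $H^{0,2}_{\delbar}$ and vanishes identically on $H^{1,1}_{\delbar}$. Meanwhile the first term $\frac{n}{2}\int_X (\sigma\bar\sigma)^{n-1}\alpha^2$ restricted to $H^{1,1}_{\delbar}$ gives a bilinear pairing $(\alpha,\beta)\mapsto \frac{n}{2}\int_X(\sigma\bar\sigma)^{n-1}\alpha\beta$ on the $(1,1)$-part. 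The crucial point is that this pairing is non-degenerate on $H^{1,1}_{\delbar}(X)$: this should follow from a Lefschetz-type/hard-Lefschetz argument for the non-degenerate form $\sigma$ (indeed this is presumably the content of the advertised Corollary \ref{cor: symplectic lefschetz}), allowing me to assume it. Granting non-degeneracy on $H^{1,1}_{\delbar}$, the rank of $q_\sigma$ is at least $h^{1,1}(X)$ coming from this block, plus a contribution of rank $\le 2$ from the $H^{2,0}\oplus H^{0,2}$ block.

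Having set this up, the two directions follow. If $h^{1,1}(X) > 0$, then $\rk q_\sigma \ge h^{1,1}(X) + (\text{contribution from the }(2,0)\oplus(0,2)\text{ block})$. Since $[\sigma]\in H^{2,0}_{\delbar}$ and $[\bar\sigma]\in H^{0,2}_{\delbar}$ pair non-trivially via the correction term (using $\int_X(\sigma\bar\sigma)^n = 1 \neq 0$), that off-diagonal block already has rank $\ge 2$, so $\rk q_\sigma \ge 1 + 2 = 3$ and $Q_\sigma$ is irreducible. Conversely, if $h^{1,1}(X) = 0$, then $H^2 = H^{2,0}_{\delbar}\oplus H^{0,2}_{\delbar}$ and $q_\sigma$ lives entirely on the two-dimensional-block structure; I would show $\rk q_\sigma \le 2$ directly from the formula, so that $q_\sigma$ factors into linear forms and $Q_\sigma$ is reducible. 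Here a small subtlety is that $H^{2,0}_{\delbar}$ may be higher-dimensional, so I must verify that $q_\sigma$ genuinely has rank at most $2$ on the whole space $H^{2,0}\oplus H^{0,2}$, not merely on the span of $[\sigma]$ and $[\bar\sigma]$; this is where I would use that the first term $\frac{n}{2}\int_X(\sigma\bar\sigma)^{n-1}\alpha^2$ also vanishes or degenerates appropriately on pure $(2,0)$ and pure $(0,2)$ classes (a $(2,0)\wedge(2,0)$ piece wedged against $(\sigma\bar\sigma)^{n-1}$ has the wrong bidegree to integrate nontrivially).

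The main obstacle I anticipate is exactly the non-degeneracy of the $(1,1)$-block, i.e.\ establishing the Lefschetz-type statement that $\alpha\mapsto \int_X(\sigma\bar\sigma)^{n-1}\alpha^2$ is non-degenerate on $H^{1,1}_{\delbar}(X)$. This requires genuinely using the $\del\delbar$-lemma, so that the Dolbeault cohomology carries a well-behaved duality (via the isomorphism with Bott--Chern cohomology and Serre duality), and using that $\sigma$ is holomorphic symplectic so that $(\sigma\bar\sigma)^{n-1}$ acts as an invertible operator in the relevant degree. The cleanest route is to identify $(\sigma\bar\sigma)^{n-1}\wedge(\cdot)$ with an isomorphism onto a top-degree complement and appeal to the perfect pairing furnished by Serre duality under the $\del\delbar$-lemma; I expect this to be exactly the content already packaged in the preliminary results of the section, so in the write-up I would invoke it rather than reprove it. The remaining rank computations are then routine linear algebra on the explicit matrix of $q_\sigma$ in a basis adapted to the Hodge decomposition.
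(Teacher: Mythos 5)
Your overall architecture coincides with the paper's: reduce irreducibility to $\rk q_\sigma \geq 3$, decompose $H^2(X,\C)$ into the $q_\sigma$-orthogonal blocks $H^{1,1}_{\delbar}(X)$ and $H^{2,0}_{\delbar}(X)\oplus H^{0,2}_{\delbar}(X)$, invoke non-degeneracy of the pairing $([\alpha],[\beta])\mapsto \frac{n}{2}\int_X(\sigma\bar\sigma)^{n-1}\alpha\beta$ on $H^{1,1}_{\delbar}(X)$ (this is exactly the paper's Lemma \ref{lemma: non-degenerate}, proved via Corollary \ref{cor: symplectic lefschetz} and Serre duality, as you anticipated), and note that $[\sigma],[\bar\sigma]$ already give a rank-$2$ contribution since $\langle[\sigma],[\bar\sigma]\rangle_\sigma \neq 0$. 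Granting the invoked lemma, your forward direction ($h^{1,1}(X)>0$ implies $Q_\sigma$ irreducible) is complete.

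The gap is in the converse. You must show that the restriction of $q_\sigma$ to $H^{2,0}_{\delbar}(X)\oplus H^{0,2}_{\delbar}(X)$ has rank at most $2$, and the mechanism you propose --- that $(2,0)\wedge(2,0)$ and $(0,2)\wedge(0,2)$ products have the wrong bidegree --- only kills the two diagonal blocks of the Gram matrix. The crux is the off-diagonal pairing
\[
\left([\tau_{2,0}],[\tau_{0,2}]\right) \longmapsto \frac{n}{2}\int_X (\sigma\bar{\sigma})^{n-1}\,\tau_{2,0}\,\tau_{0,2},
\]
which has the correct bidegree $(2n,2n)$ and does \emph{not} vanish; a priori its rank can be as large as $h^{2,0}(X)$, so when $h^{1,1}(X)=0$ but $h^{2,0}(X)\geq 2$ your argument leaves open the possibility that $\rk q_\sigma \geq 4$ and $Q_\sigma$ is irreducible, contradicting the theorem. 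The missing idea --- which is the content of the paper's Proposition \ref{prop: decomposition} --- is that this pairing factors through the two linear functionals appearing in the correction term of \eqref{eq: bbf}. Indeed, since $H^{2n,0}_{\delbar}(X)=\C\cdot[\sigma^n]$ and $H^{0,2n}_{\delbar}(X)=\C\cdot[\bar{\sigma}^n]$, the normalization \eqref{eq: normalization} gives
\[
[\sigma^{n-1}\tau_{2,0}]_{\delbar} = \left(\int_X \sigma^{n-1}\bar{\sigma}^n\tau_{2,0}\right)[\sigma^n]_{\delbar},
\qquad
[\bar{\sigma}^{n-1}\tau_{0,2}]_{\delbar} = \left(\int_X \sigma^n\bar{\sigma}^{n-1}\tau_{0,2}\right)[\bar{\sigma}^n]_{\delbar},
\]
and hence
\[
\int_X (\sigma\bar{\sigma})^{n-1}\tau_{2,0}\tau_{0,2}
= \left(\int_X \sigma^{n-1}\bar{\sigma}^n\tau_{2,0}\right)\left(\int_X \sigma^n\bar{\sigma}^{n-1}\tau_{0,2}\right).
\]
Substituting into \eqref{eq: bbf}, the restriction of $q_\sigma$ to $H^{2,0}_{\delbar}(X)\oplus H^{0,2}_{\delbar}(X)$ collapses to $\bigl(n + (1-n)\bigr)\,\lambda_1\lambda_2 = \lambda_1\lambda_2$, where $\lambda_1,\lambda_2$ are the two functionals above: a product of two linear forms, hence of rank at most $2$. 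With $h^{1,1}(X)=0$ this exhibits $Q_\sigma$ as a union of two hyperplanes. Once you insert this factorization (equivalently, cite Proposition \ref{prop: decomposition}, which identifies the orthogonal complement $W$ of $\Span\{[\sigma],[\bar{\sigma}]\}$ inside $H^{2,0}_{\delbar}(X)\oplus H^{0,2}_{\delbar}(X)$ as the kernel of $q_\sigma$), your proof closes and agrees with the paper's.
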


\begin{theorem}\label{thm: smooth}
Let $(X, \sigma)$ be a complex symplectic manifold, and assume that $X$ satisfies the $\del \delbar$-lemma. Then the following are equivalent:
\begin{enumerate}
\item $q_\sigma$ is non-degenerate;
\item $h^{2, 0}(X) = 1$, i.e., $H^{2, 0}_{\delbar}(X) = \C \cdot [\sigma]$;
\item the quadric $Q_\sigma$ is smooth.
\end{enumerate}
\end{theorem}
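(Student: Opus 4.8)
The plan is to prove the chain of equivalences by first disposing of $(1)\Leftrightarrow(3)$ and then concentrating all the work on $(1)\Leftrightarrow(2)$. The equivalence $(1)\Leftrightarrow(3)$ is the classical fact of projective geometry that the singular locus of a quadric $Q_\sigma=\{q_\sigma=0\}\subseteq\PP(H^2_{dR}(X,\C))$ is exactly $\PP(\text{rad}\,\langle\blank,\blank\rangle_\sigma)$, so that $Q_\sigma$ is smooth precisely when the polar bilinear form $\langle\blank,\blank\rangle_\sigma$ has trivial radical, i.e.\ when $q_\sigma$ is non-degenerate; since everything is over $\C$ this requires no positivity. Thus the real content is $(1)\Leftrightarrow(2)$, and for this I would use the Hodge decomposition $H^2_{dR}(X,\C)=H^{2,0}_{\delbar}(X)\oplus H^{1,1}_{\delbar}(X)\oplus H^{0,2}_{\delbar}(X)$ granted by the $\del\delbar$-lemma to put $\langle\blank,\blank\rangle_\sigma$ in block form.

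First I would show that $\langle\blank,\blank\rangle_\sigma$ is block-diagonal for the splitting $H^2_{dR}(X,\C)=H^{1,1}_{\delbar}(X)\oplus W$, where $W=H^{2,0}_{\delbar}(X)\oplus H^{0,2}_{\delbar}(X)$. This is pure bidegree bookkeeping on \eqref{eq: bbf}: the factor $(\sigma\bar\sigma)^{n-1}$ has bidegree $(2n-2,2n-2)$, so $\int_X(\sigma\bar\sigma)^{n-1}\alpha\beta$ only sees the $(1,1)\!\cdot\!(1,1)$ and $(2,0)\!\cdot\!(0,2)$ parts of $\alpha\beta$, while $\int_X\sigma^{n-1}\bar\sigma^n\alpha$ only sees the $(2,0)$-part of $\alpha$ and $\int_X\sigma^n\bar\sigma^{n-1}\alpha$ only its $(0,2)$-part. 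Feeding these vanishings into the polarization of \eqref{eq: bbf} gives at once that $H^{1,1}_{\delbar}(X)$ is orthogonal to $W$, that $H^{2,0}_{\delbar}(X)$ and $H^{0,2}_{\delbar}(X)$ are each isotropic, and that the only surviving pairing on $W$ is the one between $H^{2,0}_{\delbar}(X)$ and $H^{0,2}_{\delbar}(X)$. Consequently $q_\sigma$ is non-degenerate if and only if both $\langle\blank,\blank\rangle_\sigma|_{H^{1,1}_{\delbar}(X)}$ and $\langle\blank,\blank\rangle_\sigma|_W$ are non-degenerate.

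Next I would analyze the $W$-block, which is where $h^{2,0}(X)=1$ enters. Any class in $H^{2,0}_{\delbar}(X)$ is represented by a holomorphic $2$-form $\alpha$, so $\sigma^{n-1}\wedge\alpha$ is a holomorphic $(2n,0)$-form; since $H^{2n,0}_{\delbar}(X)=\C\cdot[\sigma^n]$ is one-dimensional there is a scalar $\lambda_\alpha$ with $\sigma^{n-1}\wedge\alpha=\lambda_\alpha\,\sigma^n$, and dually $\bar\sigma^{n-1}\wedge\beta=\mu_\beta\,\bar\sigma^n$ for $\beta\in H^{0,2}_{\delbar}(X)$. Using the normalization \eqref{eq: normalization} one computes $\int_X(\sigma\bar\sigma)^{n-1}\alpha\beta=\lambda_\alpha\mu_\beta$, $\int_X\sigma^{n-1}\bar\sigma^n\alpha=\lambda_\alpha$ and $\int_X\sigma^n\bar\sigma^{n-1}\beta=\mu_\beta$, so that the two contributions in \eqref{eq: bbf} collapse to
\[\langle\alpha,\beta\rangle_\sigma=\tfrac{n}{2}\lambda_\alpha\mu_\beta+\tfrac{1-n}{2}\lambda_\alpha\mu_\beta=\tfrac{1}{2}\lambda_\alpha\mu_\beta.\]
Hence the radical of the $W$-block is exactly $\ker\lambda\oplus\ker\mu$, where $\lambda\colon H^{2,0}_{\delbar}(X)\to\C$ and $\mu\colon H^{0,2}_{\delbar}(X)\to\C$ are the linear functionals just defined. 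Since $\lambda_\sigma=1$ and $\mu_{\bar\sigma}=1$ these functionals are non-zero, so their kernels vanish if and only if $h^{2,0}(X)=h^{0,2}(X)=1$; as $h^{2,0}(X)\ge1$ always and $h^{0,2}(X)=h^{2,0}(X)$ by conjugation, the $W$-block is non-degenerate if and only if $h^{2,0}(X)=1$, which is statement $(2)$.

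Finally I would show that the $H^{1,1}_{\delbar}(X)$-block is \emph{always} non-degenerate, so that it never affects the outcome. On $H^{1,1}_{\delbar}(X)$ the form reduces to $\langle\alpha,\beta\rangle_\sigma=\tfrac{n}{2}\int_X(\sigma\bar\sigma)^{n-1}\alpha\beta$, and, through the Serre-duality pairing between $H^{1,1}_{\delbar}(X)$ and $H^{2n-1,2n-1}_{\delbar}(X)$, its non-degeneracy is equivalent to $(\sigma\bar\sigma)^{n-1}\wedge(\blank)\colon H^{1,1}_{\delbar}(X)\to H^{2n-1,2n-1}_{\delbar}(X)$ being an isomorphism. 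This is precisely the Lefschetz-type statement recorded in Corollary \ref{cor: symplectic lefschetz}, and it is the step I expect to be the genuine obstacle: everything else is bidegree bookkeeping together with the one-dimensionality of the extremal Dolbeault groups, whereas here one really needs the symplectic hard-Lefschetz isomorphism that the $\del\delbar$-lemma makes available. Granting it, combining the previous paragraphs gives that $q_\sigma$ is non-degenerate if and only if $h^{2,0}(X)=1$, that is $(1)\Leftrightarrow(2)$, which together with $(1)\Leftrightarrow(3)$ closes the chain of equivalences.
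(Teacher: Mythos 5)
Your proposal is correct and follows essentially the same route as the paper's: type-orthogonality of $H^{1,1}_{\delbar}(X)$ against $H^{2,0}_{\delbar}(X)\oplus H^{0,2}_{\delbar}(X)$, non-degeneracy of the $(1,1)$-block via the symplectic Lefschetz isomorphism plus Serre duality (the paper's Lemmas \ref{lemma: iso (1, 1)} and \ref{lemma: non-degenerate}), identification of the radical inside $H^{2,0}_{\delbar}(X)\oplus H^{0,2}_{\delbar}(X)$ with the kernels of the two functionals $\int_X\sigma^{n-1}\bar{\sigma}^n(\blank)$ and $\int_X\sigma^n\bar{\sigma}^{n-1}(\blank)$ (exactly the space $W$ of Proposition \ref{prop: decomposition}), and the classical smooth-iff-non-degenerate fact. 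Your only slip is a label: the isomorphism your $(1,1)$-block argument needs is $L_{\sigma\bar{\sigma}}^{n-1}\colon H^{1,1}_{\delbar}(X)\to H^{2n-1,2n-1}_{\delbar}(X)$, i.e.\ Lemma \ref{lemma: iso (1, 1)} (Corollary \ref{cor: symplectic lefschetz} composed with its conjugate, which is where the $\del\delbar$-lemma enters), not Corollary \ref{cor: symplectic lefschetz} itself --- a dependence you in fact anticipate when you flag that step as the one requiring the $\del\delbar$-lemma.
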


To be as clear as possible, we prove here the technical results needed for the Proofs of Theorem \ref{thm: irreducible} and Theorem \ref{thm: smooth}, which we postpone at the end of the Section.

As a matter of notations, $\Omega^1_X$ will be used to denote the holomorphic cotangent bundle of $X$, while $\Omega^p_X = \bigwedge^p \Omega^1_X$ and $K_X = \Omega^n_X$ will denote the bundles of holomorphic $p$-forms on $X$ and the canonical bundle of $X$ respectively. Finally, $L_{\alpha}$ will denote the operator given by wedging with $\alpha$.

\begin{lemma}\label{lemma: iso vbund}
Let $(X, \sigma)$ be a complex symplectic manifold of dimension $2n$. Then the map
\[L_\sigma^{n - 1}: \Omega^1_X \longrightarrow \Omega^{2n - 1}_X\]
given by the wedge product with $\sigma^{n - 1}$ is an isomorphism of vector bundles.
\end{lemma}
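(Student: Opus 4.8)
The plan is to prove the statement pointwise, reducing it to a problem in symplectic linear algebra. Since $\dim_\C X = 2n$, both $\Omega^1_X$ and $\Omega^{2n-1}_X = \bigwedge^{2n-1} \Omega^1_X$ are vector bundles of the same rank $2n$ (indeed $\binom{2n}{2n-1} = 2n$). A morphism of vector bundles of equal rank is an isomorphism if and only if it is injective on each fiber, so it suffices to fix $x \in X$ and show that the induced linear map $L_{\sigma_x}^{n-1} \colon V^* \to \bigwedge^{2n-1} V^*$ is injective, where $V = T_x^{1,0} X$ and $\sigma_x \in \bigwedge^2 V^*$ is the non-degenerate alternating form obtained by evaluating $\sigma$ at $x$.

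First I would invoke the Darboux normal form for symplectic vector spaces: since $\sigma_x$ is non-degenerate, there is a basis $x_1, \dots, x_n, y_1, \dots, y_n$ of $V^*$ with $\sigma_x = \sum_{i=1}^n x_i \wedge y_i$. With respect to this basis, the space $\bigwedge^{2n-1} V^*$ has the standard basis consisting of the $2n$ forms obtained by omitting exactly one of the covectors $x_1, \dots, x_n, y_1, \dots, y_n$ from the top wedge $x_1 \wedge \cdots \wedge x_n \wedge y_1 \wedge \cdots \wedge y_n$.

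Next I would compute the image of each basis covector directly. Expanding $\sigma_x^{n-1} = (n-1)!\sum_{|I| = n-1} \bigwedge_{i \in I}(x_i \wedge y_i)$, where $I$ ranges over the $(n-1)$-element subsets of $\{1, \dots, n\}$, and wedging with $x_j$, every summand with $j \in I$ vanishes, leaving only the term $I = \{1, \dots, n\} \setminus \{j\}$; thus $x_j \wedge \sigma_x^{n-1}$ equals, up to the nonzero factor $\pm(n-1)!$, the generator of $\bigwedge^{2n-1} V^*$ obtained by omitting $y_j$. Symmetrically, $y_j \wedge \sigma_x^{n-1}$ is, up to a nonzero scalar, the generator obtained by omitting $x_j$. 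Hence $L_{\sigma_x}^{n-1}$ carries the chosen basis of $V^*$ bijectively (up to nonzero scalars) onto the standard basis of $\bigwedge^{2n-1} V^*$, so it is an isomorphism. Since this holds at every point, $L_\sigma^{n-1}$ is an isomorphism of vector bundles.

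This computation is the degree-one case of the symplectic Hard Lefschetz theorem: the operators $(L_\sigma, \Lambda_\sigma, H)$ generate an $\mathfrak{sl}_2$-action on $\bigwedge^\bullet V^*$ under which $L_\sigma^{n-1}$ sends the weight $-(n-1)$ space $\bigwedge^1 V^*$ isomorphically onto the weight $(n-1)$ space $\bigwedge^{2n-1} V^*$, and one could alternatively deduce the statement from this abstract fact. I do not expect a serious obstacle: the only points requiring care are that non-degeneracy is precisely what guarantees both the existence of the Darboux basis and the non-vanishing of the relevant top-degree terms, together with the sign bookkeeping in the combinatorial identification above — neither of which affects the conclusion.
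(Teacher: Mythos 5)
Your proof is correct, but it takes a genuinely different route from the paper's. You argue pointwise by symplectic linear algebra: choose a Darboux basis at each point, expand $\sigma_x^{n-1} = (n-1)!\sum_{|I|=n-1}\bigwedge_{i\in I}(x_i\wedge y_i)$, and check that $L_{\sigma_x}^{n-1}$ sends the basis covectors to the standard basis of $\bigwedge^{2n-1}V^*$ up to nonzero scalars, so the bundle map (between bundles of equal rank $2n$) is fiberwise bijective and hence an isomorphism; all of these steps are sound. The paper instead argues structurally, following Huybrechts and Fujiki: non-degeneracy of $\sigma$ gives $T_X \simeq \Omega^1_X$ via $v \mapsto \sigma(v, \blank)$, the section $\sigma^n$ trivializes $K_X$, and the perfect pairing $\Omega^1_X \otimes \Omega^{2n-1}_X \to K_X$ then exhibits $\Omega^{2n-1}_X$ as $(\Omega^1_X)^*$, yielding the chain $\Omega^1_X \simeq T_X \simeq (\Omega^1_X)^* \simeq \Omega^{2n-1}_X$, of which $L_\sigma^{n-1}$ is asserted (with the references) to be the composition. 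What the paper's approach buys is a coordinate-free duality picture with no sign or combinatorial bookkeeping; what yours buys is self-containedness, since verifying that the composition above really is $L_\sigma^{n-1}$ (up to a nonzero constant) ultimately reduces to essentially the Darboux-basis computation you carry out explicitly, a point the paper delegates to the literature. Your closing remark that the statement is the degree-one instance of the fiberwise $\mathfrak{sl}_2$/hard-Lefschetz action is also correct and gives a third valid derivation.
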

\begin{proof}
The Lemma follows from the description of this map in \cite{huy} and \cite{fujiki}, which we recall here. Since $\sigma$ is non-degenerate, it induces isomorphisms $T_X \simeq \Omega^1_X$ and $K_X \simeq \cO_X$. The first is given by
\[\begin{array}{ccc}
T_X & \longmapsto & \Omega^1_X\\
v & \longmapsto & \sigma(v, \blank),
\end{array}\]
while the second is induced by $\sigma^n$, which is holomorphic and everywhere non-vanishing. Since we have also a non-degenerate pairing $\Omega^1_X \otimes \Omega^{2n - 1}_X \longrightarrow K_X$ induced by the cup product, the fact that $K_X \simeq \cO_X$ exhibits also $\Omega^{2n - 1}_X$ as $(\Omega^1_X)^*$. So we have the sequence of isomorphisms
\[\Omega^1_X \simeq T_X \simeq (\Omega^1_X)^* \simeq \Omega^{2n - 1}_X,\]
and the map $L_\sigma^{n - 1}$ is nothing but this composition.
\end{proof}

As a consequence of Lemma \ref{lemma: iso vbund} the following Corollary holds.

\begin{corollary}\label{cor: symplectic lefschetz}
We have for the Dolbeault cohomology the following isomorphisms:
\[\xymatrix{L_\sigma^{n - 1}: & H^q(X, \Omega^1_X) \ar[r] \ar@{=}[d] & H^q(X, \Omega^{2n - 1}_X) \ar@{=}[d]\\
 & H^{1, q}_{\delbar}(X) & H^{2n - 1, q}_{\delbar}(X),}\]
where $L_\sigma^{n -1}$ is induced by the wedge product with $\sigma^{n - 1}$.
\end{corollary}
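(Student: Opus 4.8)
The plan is to deduce the statement directly from Lemma \ref{lemma: iso vbund} by passing from the bundle isomorphism to the induced isomorphism on sheaf cohomology, and then identifying sheaf cohomology with Dolbeault cohomology via the Dolbeault theorem.

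First I would observe that since $\sigma$ is holomorphic, so is $\sigma^{n - 1}$, and hence $L_\sigma^{n - 1}$ is a morphism of locally free sheaves $\Omega^1_X \to \Omega^{2n - 1}_X$. By Lemma \ref{lemma: iso vbund} this morphism is an isomorphism, so it induces isomorphisms on all sheaf cohomology groups $H^q(X, \Omega^1_X) \to H^q(X, \Omega^{2n - 1}_X)$. The vertical identifications in the diagram are the Dolbeault isomorphisms $H^q(X, \Omega^p_X) \cong H^{p, q}_{\delbar}(X)$, which arise because the Dolbeault complex $(\cA^{p, \bullet}(X), \delbar)$ is a fine resolution of $\Omega^p_X$.

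The only point requiring care is that under these identifications the induced map on sheaf cohomology corresponds precisely to the map $L_\sigma^{n - 1}$ on Dolbeault cohomology, i.e.\ the naturality of the Dolbeault isomorphism with respect to $L_\sigma^{n - 1}$. For this I would extend the wedge operation to the Dolbeault resolutions, obtaining $L_\sigma^{n - 1}: \cA^{1, \bullet}(X) \to \cA^{2n - 1, \bullet}(X)$, $\alpha \mapsto \sigma^{n - 1} \wedge \alpha$ (which lands in the correct bidegree since $\sigma^{n - 1}$ has type $(2n - 2, 0)$). Because $\sigma^{n - 1}$ is $\delbar$-closed and of even total degree, one computes
\[\delbar(\sigma^{n - 1} \wedge \alpha) = \sigma^{n - 1} \wedge \delbar\alpha,\]
so $L_\sigma^{n - 1}$ is a chain map commuting with $\delbar$. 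This chain map covers the sheaf morphism $L_\sigma^{n - 1}: \Omega^1_X \to \Omega^{2n - 1}_X$, hence it realizes the induced map on sheaf cohomology at the level of Dolbeault representatives, which is exactly $L_\sigma^{n - 1}$ on $H^{p, q}_{\delbar}(X)$.

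The main (and essentially the only) obstacle is this compatibility check; once it is in place, the isomorphism on sheaf cohomology transports immediately to the claimed isomorphism $L_\sigma^{n - 1}: H^{1, q}_{\delbar}(X) \to H^{2n - 1, q}_{\delbar}(X)$, completing the proof.
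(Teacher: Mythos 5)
Your proposal is correct and follows exactly the route the paper intends: the paper states the Corollary as an immediate consequence of Lemma \ref{lemma: iso vbund}, namely that the bundle isomorphism $L_\sigma^{n-1}\colon \Omega^1_X \to \Omega^{2n-1}_X$ induces isomorphisms on sheaf cohomology, which are identified with the Dolbeault groups. Your explicit verification that wedging with the holomorphic, even-degree form $\sigma^{n-1}$ commutes with $\delbar$ and so realizes the induced map on Dolbeault representatives is precisely the (omitted) compatibility check that makes the paper's one-line deduction rigorous.
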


We can see this result as an instance of a sort of ``symplectic Lefschetz theorem''.

%\begin{rem}
%One can observe that a similar argument as in Lemma \ref{lemma: vspace iso} can be used to show that, for any fixed $0 \leq k \leq n$, the map
%\[\begin{array}{ccc}
%\bigwedge^{n - k} V^* & \longrightarrow & \bigwedge^{n + k} V^*\\
%\alpha & \longmapsto & \sigma^k \wedge \alpha
%\end{array}\]
%is an isomorphism, so that the results of Lemma \ref{lemma: iso vbund} and Corollary \ref{cor: symplectic lefschetz} can de extended accordingly. In particular, Corollary \ref{cor: symplectic lefschetz} establishes that $L_\sigma^k: H^{n - k, q}_{\delbar}(X) \longrightarrow H^{n + k, q}_{\delbar}(X)$ is an isomorphism for $0 \leq k \leq n$.
%\end{rem}

\begin{lemma}\label{lemma: iso (1, 1)}
Let $(X, \sigma)$ be a complex symplectic manifold, and assume that $X$ satisfies the $\del \delbar$-lemma. Then
\[\begin{array}{rccc}
L_{\sigma \bar{\sigma}}^{n - 1}: & H^{1, 1}_{\delbar}(X) & \longrightarrow & H^{2n - 1, 2n - 1}_{\delbar}(X)\\
 & [\alpha]_{\delbar} & \longmapsto & [(\sigma \bar{\sigma})^{n - 1} \alpha]_{\delbar}
\end{array}\]
is an isomorphism.
\end{lemma}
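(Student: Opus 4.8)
The plan is to factor $L_{\sigma \bar{\sigma}}^{n-1}$ as a composition of two wedge maps and to recognize each factor as an instance of Corollary \ref{cor: symplectic lefschetz}. Since $\sigma$ and $\bar{\sigma}$ are even-degree forms they commute, so $(\sigma \bar{\sigma})^{n-1} = \sigma^{n-1} \wedge \bar{\sigma}^{n-1}$ and hence, on the level of forms,
\[
L_{\sigma \bar{\sigma}}^{n-1} = L_{\bar{\sigma}}^{n-1} \circ L_\sigma^{n-1}.
\]
Both $\sigma^{n-1}$ and $\bar{\sigma}^{n-1}$ are $\delbar$-closed (indeed $\sigma$ is holomorphic, and $d \bar{\sigma} = \overline{d\sigma} = 0$), so wedging with either sends $\delbar$-closed forms to $\delbar$-closed forms and $\delbar$-exact to $\delbar$-exact. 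Thus both factors descend to well-defined maps on Dolbeault cohomology, and it suffices to show that each factor in
\[
H^{1,1}_{\delbar}(X) \xrightarrow{L_\sigma^{n-1}} H^{2n-1,1}_{\delbar}(X) \xrightarrow{L_{\bar{\sigma}}^{n-1}} H^{2n-1,2n-1}_{\delbar}(X)
\]
is an isomorphism.

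The first factor is exactly Corollary \ref{cor: symplectic lefschetz} in the case $q = 1$, under the Dolbeault identifications $H^{1,1}_{\delbar}(X) = H^1(X, \Omega^1_X)$ and $H^{2n-1,1}_{\delbar}(X) = H^1(X, \Omega^{2n-1}_X)$; it is therefore an isomorphism. Note that this step uses only Lemma \ref{lemma: iso vbund} and not the $\del \delbar$-lemma.

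For the second factor I would pass to complex conjugates. Since $X$ satisfies the $\del \delbar$-lemma, complex conjugation induces isomorphisms $\overline{(\blank)} \colon H^{p,q}_{\delbar}(X) \to H^{q,p}_{\delbar}(X)$. A direct check shows that the square relating $L_{\bar{\sigma}}^{n-1} \colon H^{2n-1,1}_{\delbar}(X) \to H^{2n-1,2n-1}_{\delbar}(X)$ to $L_\sigma^{n-1} \colon H^{1,2n-1}_{\delbar}(X) \to H^{2n-1,2n-1}_{\delbar}(X)$ through these conjugations commutes, because $\overline{\bar{\sigma}^{n-1} \wedge \alpha} = \sigma^{n-1} \wedge \bar{\alpha}$. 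The bottom map is Corollary \ref{cor: symplectic lefschetz} in the case $q = 2n-1$, hence an isomorphism; since the two vertical conjugations are isomorphisms, so is $L_{\bar{\sigma}}^{n-1}$. Composing the two factors yields that $L_{\sigma \bar{\sigma}}^{n-1}$ is an isomorphism.

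The genuinely delicate point is the second factor: wedging with $\bar{\sigma}^{n-1}$ is \emph{not} a morphism of holomorphic vector bundles, so Lemma \ref{lemma: iso vbund} and its Corollary do not apply to it directly. The $\del \delbar$-lemma enters precisely here, through the conjugation symmetry of Dolbeault cohomology, which converts the anti-holomorphic Lefschetz operator $L_{\bar{\sigma}}^{n-1}$ into the holomorphic operator $L_\sigma^{n-1}$ already controlled by the Corollary. This also explains why the hypothesis cannot be dropped at this stage, whereas the first factor is unconditional.
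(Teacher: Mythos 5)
Your proposal is correct and follows essentially the same route as the paper: factor $L_{\sigma\bar{\sigma}}^{n-1}$ as $L_{\bar{\sigma}}^{n-1}\circ L_{\sigma}^{n-1}$ through $H^{2n-1,1}_{\delbar}(X)$, handle the holomorphic factor by Corollary \ref{cor: symplectic lefschetz}, and handle the anti-holomorphic factor by conjugating it into an instance of the same Corollary, which is exactly where the $\del\delbar$-lemma enters (the paper phrases this as a commutative square of isomorphisms with $L_{\bar{\sigma}}^{n-1}([\alpha]) = \overline{L_{\sigma}^{n-1}([\bar{\alpha}])}$). Your closing observation that the holomorphic factor is unconditional while the conjugation step is what fails without the $\del\delbar$-lemma matches the paper's Remark and Example \ref{ex: no iso del delbar}.
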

\begin{proof}
By Corollary \ref{cor: symplectic lefschetz}, $L_\sigma^{n - 1}: H^{1, q}_{\delbar}(X) \longrightarrow H^{2n - 1, q}(X)$ is an isomorphism. Since $X$ satisfies the $\del \delbar$-lemma, and so complex conjugation is an isomorphism, $L_{\bar{\sigma}}^{n - 1}: H^{q, 1}_{\delbar}(X) \longrightarrow H^{q, 2n - 1}(X)$ is an isomorphism as well, being $L_{\bar{\sigma}}^{n - 1}([\alpha]) = \overline{L_{\sigma}^{n - 1}([\bar{\alpha}])}$. In particular, for $q = 1$ we get our result since we have the commutative square
\[\xymatrix{H^{1, 1}_{\delbar}(X) \ar[r]^(0.45){L_\sigma^{n - 1}} \ar[d]^{L_{\bar{\sigma}}^{n - 1}} & H^{2n - 1, 1}_{\delbar}(X) \ar[d]^{L_{\bar{\sigma}}^{n - 1}}\\
H^{1, 2n - 1}_{\delbar}(X) \ar[r]^(0.45){L_\sigma^{n - 1}} & H^{2n - 1, 2n - 1}_{\delbar}(X)}\]
where all the maps are isomorphisms.
\end{proof}

\begin{rem}
Lemma \ref{lemma: iso (1, 1)} is false if $X$ does not satisfy the $\del \delbar$-lemma, as we will show in Example \ref{ex: no iso del delbar}.
\end{rem}

\begin{lemma}\label{lemma: non-degenerate}
Let $(X, \sigma)$ be a complex symplectic manifold, and assume that $X$ satisfies the $\del \delbar$-lemma. Then the restriction of $\langle \blank, \blank \rangle_\sigma$ to $H^{1, 1}_{\delbar}(X)$ is non-degenerate.
\end{lemma}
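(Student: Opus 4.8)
The plan is to compute the restriction of $\langle \blank, \blank \rangle_\sigma$ to $H^{1,1}_{\delbar}(X)$ explicitly, and then to recognize the resulting pairing as the composition of the isomorphism of Lemma \ref{lemma: iso (1, 1)} with the perfect Serre duality pairing. First I would polarize \eqref{eq: bbf}. Writing $A([\alpha]) = \int_X \sigma^{n-1}\bar{\sigma}^n \alpha$ and $B([\alpha]) = \int_X \sigma^n \bar{\sigma}^{n-1} \alpha$, polarization yields
\[\langle [\alpha], [\beta] \rangle_\sigma = \frac{n}{2} \int_X (\sigma\bar{\sigma})^{n-1} \alpha\beta + \frac{1-n}{2}\bigl( A([\alpha])B([\beta]) + A([\beta])B([\alpha]) \bigr).\]
The key observation is that the second summand vanishes identically on $H^{1,1}_{\delbar}(X)$ for bidegree reasons: if $\alpha$ is of type $(1,1)$, then $\sigma^{n-1}\bar{\sigma}^n \alpha$ has bidegree $(2n-1, 2n+1)$ and $\sigma^n \bar{\sigma}^{n-1} \alpha$ has bidegree $(2n+1, 2n-1)$, both exceeding the admissible range on a manifold of complex dimension $2n$, so that $A([\alpha]) = B([\alpha]) = 0$. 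Hence on $H^{1,1}_{\delbar}(X)$ the form reduces to
\[\langle [\alpha], [\beta] \rangle_\sigma = \frac{n}{2} \int_X \alpha \wedge (\sigma\bar{\sigma})^{n-1}\beta = \frac{n}{2} \int_X \alpha \wedge L_{\sigma\bar{\sigma}}^{n-1}\beta.\]

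The second step is to invoke Serre duality: the cup product pairing
\[H^{1,1}_{\delbar}(X) \times H^{2n-1, 2n-1}_{\delbar}(X) \longrightarrow \C, \qquad ([\alpha], [\gamma]) \longmapsto \int_X \alpha \wedge \gamma\]
is perfect (here one uses that $\Omega^{2n-1}_X \simeq (\Omega^1_X)^*$ via $K_X \simeq \cO_X$, as already noted in the proof of Lemma \ref{lemma: iso vbund}, together with compactness of $X$). Now suppose $[\alpha] \in H^{1,1}_{\delbar}(X)$ lies in the kernel of $\langle \blank, \blank \rangle_\sigma$, that is, $\int_X \alpha \wedge L_{\sigma\bar{\sigma}}^{n-1}\beta = 0$ for every $[\beta] \in H^{1,1}_{\delbar}(X)$. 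By Lemma \ref{lemma: iso (1, 1)} the operator $L_{\sigma\bar{\sigma}}^{n-1}$ is an isomorphism onto $H^{2n-1, 2n-1}_{\delbar}(X)$, so as $[\beta]$ ranges over $H^{1,1}_{\delbar}(X)$ the class $L_{\sigma\bar{\sigma}}^{n-1}[\beta]$ ranges over all of $H^{2n-1, 2n-1}_{\delbar}(X)$. Thus $\int_X \alpha \wedge \gamma = 0$ for every $[\gamma] \in H^{2n-1, 2n-1}_{\delbar}(X)$, and perfectness of the Serre pairing forces $[\alpha] = 0$. This establishes non-degeneracy.

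The only genuine input beyond bookkeeping is Lemma \ref{lemma: iso (1, 1)}, which is precisely where the $\del\delbar$-lemma hypothesis enters; everything else is the bidegree vanishing and standard duality. The main point to get right is the well-definedness of all the integrals on Dolbeault classes: this follows because $\sigma\bar{\sigma}$ is $d$-closed (so $L_{\sigma\bar{\sigma}}^{n-1}$ sends $\delbar$-closed forms to $\delbar$-closed forms) and Stokes' theorem annihilates the $\delbar$-exact ambiguities, exactly as in the usual verification that the Serre pairing descends to cohomology. I do not anticipate a serious obstacle here, only the care needed to confirm that the linear terms $A, B$ truly drop out before applying the duality argument.
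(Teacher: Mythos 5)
Your proof is correct and takes essentially the same route as the paper's: both reduce the restricted form to $\frac{n}{2}\int_X(\sigma\bar{\sigma})^{n-1}\alpha\beta$ (the linear terms dropping out for bidegree reasons), and then combine the isomorphism of Lemma \ref{lemma: iso (1, 1)} with Serre duality to kill any class in the kernel. The only cosmetic difference is that the paper applies $L_{\sigma\bar{\sigma}}^{n-1}$ to the kernel element $[\alpha]$ and uses injectivity, whereas you apply it to the test class $[\beta]$ and use surjectivity --- the same argument transposed across the symmetric pairing.
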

\begin{proof}
The symmetric bilinear form $\langle \blank, \blank \rangle_\sigma$ on $H^2(X, \C)$ restricts to the form on $H^{1, 1}_{\delbar}(X)$ given by
\[\langle [\alpha], [\beta] \rangle_\sigma = \frac{n}{2} \int_X (\sigma \bar{\sigma})^{n - 1} \alpha \beta.\]
Assume now that $[\alpha] \in H^{1, 1}_{\delbar}(X)$ is a class such that $\langle [\alpha], [\beta] \rangle_\sigma = 0$ for all $[\beta] \in H^{1, 1}_{\delbar}(X)$. This means that
\[\int_X L_{\sigma \bar{\sigma}}^{n - 1}([\alpha]) [\beta] = 0,\]
i.e., that $L_{\sigma \bar{\sigma}}^{n - 1}([\alpha])$ is a $(2n - 1, 2n - 1)$-class which annihilates all the space $H^{1, 1}_{\delbar}(X)$. By Serre duality, we have that then $L_{\sigma \bar{\sigma}}^{n - 1}([\alpha]) = 0$, and since we proved that $L_{\sigma \bar{\sigma}}^{n - 1}$ is an isomorphism, we finally get that $[\alpha] = 0$.
\end{proof}

\begin{proposition}\label{prop: (1, 1) perp}
Let $(X, \sigma)$ be a complex symplectic manifold, and assume that $X$ satisfies the $\del \delbar$-lemma. Then
\[H^{2, 0}_{\delbar}(X) \oplus H^{0, 2}_{\delbar}(X) = H^{1, 1}_{\delbar}(X)^\perp.\]
\end{proposition}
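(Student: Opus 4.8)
The plan is to exploit the Hodge decomposition
\[
H^2(X, \C) = H^{2, 0}_{\delbar}(X) \oplus H^{1, 1}_{\delbar}(X) \oplus H^{0, 2}_{\delbar}(X),
\]
which is available because $X$ satisfies the $\del \delbar$-lemma, and to show that pairing an arbitrary class against a $(1, 1)$-class sees only its $(1, 1)$-component. Concretely, for $[\alpha] \in H^2(X, \C)$ I would write $[\alpha] = [\alpha]^{2, 0} + [\alpha]^{1, 1} + [\alpha]^{0, 2}$ and compute $\langle [\alpha], [\gamma] \rangle_\sigma$ for an arbitrary $[\gamma] \in H^{1, 1}_{\delbar}(X)$.

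The first key step is a bidegree bookkeeping on the polar form
\[
\langle [\alpha], [\gamma] \rangle_\sigma = \frac{n}{2} \int_X (\sigma \bar{\sigma})^{n - 1} \alpha \gamma + \frac{1 - n}{2} \Bigl[\, f(\alpha) g(\gamma) + f(\gamma) g(\alpha) \,\Bigr],
\]
where $f(\blank) = \int_X \sigma^{n - 1} \bar{\sigma}^n \blank$ and $g(\blank) = \int_X \sigma^n \bar{\sigma}^{n - 1} \blank$. The two rank-one terms drop out: since $\sigma^{n - 1} \bar{\sigma}^n$ has bidegree $(2n - 2, 2n)$ and $\gamma$ has bidegree $(1, 1)$, the product $\sigma^{n - 1} \bar{\sigma}^n \gamma$ has bidegree $(2n - 1, 2n + 1)$, whose anti-holomorphic degree exceeds $2n$ and hence vanishes identically; thus $f(\gamma) = 0$, and symmetrically $g(\gamma) = 0$. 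Moreover $(\sigma \bar{\sigma})^{n - 1} \gamma$ has bidegree $(2n - 1, 2n - 1)$, so in $\int_X (\sigma \bar{\sigma})^{n - 1} \alpha \gamma$ only the $(1, 1)$-component of $\alpha$ can produce a top-degree contribution. I therefore obtain
\[
\langle [\alpha], [\gamma] \rangle_\sigma = \frac{n}{2} \int_X (\sigma \bar{\sigma})^{n - 1} \, \alpha^{1, 1} \, \gamma = \langle [\alpha]^{1, 1}, [\gamma] \rangle_\sigma,
\]
recognizing the right-hand side as the restricted form treated in Lemma \ref{lemma: non-degenerate}.

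With this identity both inclusions become immediate. For the inclusion $\supseteq$: if $[\alpha] \in H^{2, 0}_{\delbar}(X) \oplus H^{0, 2}_{\delbar}(X)$ then $[\alpha]^{1, 1} = 0$, so $\langle [\alpha], [\gamma] \rangle_\sigma = 0$ for every $[\gamma] \in H^{1, 1}_{\delbar}(X)$, i.e.\ $[\alpha] \in H^{1, 1}_{\delbar}(X)^\perp$. For the inclusion $\subseteq$: if $[\alpha] \in H^{1, 1}_{\delbar}(X)^\perp$ then $\langle [\alpha]^{1, 1}, [\gamma] \rangle_\sigma = 0$ for all $[\gamma] \in H^{1, 1}_{\delbar}(X)$, and since the restriction of $\langle \blank, \blank \rangle_\sigma$ to $H^{1, 1}_{\delbar}(X)$ is non-degenerate by Lemma \ref{lemma: non-degenerate}, this forces $[\alpha]^{1, 1} = 0$, whence $[\alpha] \in H^{2, 0}_{\delbar}(X) \oplus H^{0, 2}_{\delbar}(X)$.

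The substantive input is Lemma \ref{lemma: non-degenerate} (and, behind it, the symplectic Lefschetz isomorphism of Lemma \ref{lemma: iso (1, 1)}); once non-degeneracy on $H^{1, 1}_{\delbar}(X)$ is granted, the only real work is the bidegree accounting that collapses the pairing onto $\alpha^{1, 1}$. I expect that accounting — in particular verifying that the rank-one correction terms $f$ and $g$ annihilate $(1, 1)$-classes — to be the main, if modest, obstacle, since it is precisely where the shape of the definition \eqref{eq: bbf} of $q_\sigma$ and the bidegree constraints on a complex manifold of dimension $2n$ must be handled carefully.
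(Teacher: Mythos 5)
Your proof is correct and rests on exactly the same ingredients as the paper's: pure-type $d$-closed representatives supplied by the $\del\delbar$-lemma, bidegree bookkeeping showing that the rank-one terms $f, g$ and the top-form cross terms vanish on the relevant components, and the non-degeneracy of the restricted form from Lemma \ref{lemma: non-degenerate}. The only difference is organizational: the paper proves the inclusion $H^{2,0}_{\delbar}(X) \oplus H^{0,2}_{\delbar}(X) \subseteq H^{1,1}_{\delbar}(X)^\perp$ by the same type argument and then concludes by a dimension count ($\dim H^{1,1}_{\delbar}(X)^\perp = b_2(X) - h^{1,1}(X) = h^{2,0}(X) + h^{0,2}(X)$), whereas you obtain the reverse inclusion directly from your projection identity $\langle [\alpha], [\gamma] \rangle_\sigma = \langle [\alpha]^{1,1}, [\gamma] \rangle_\sigma$ together with Lemma \ref{lemma: non-degenerate} -- an equivalent piece of linear algebra resting on the same non-degeneracy.
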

\begin{proof}
We will prove that $H^{2, 0}_{\delbar}(X) \oplus H^{0, 2}_{\delbar}(X) \subseteq H^{1, 1}_{\delbar}(X)^\perp$ and that the two spaces have the same dimension.
\begin{itemize}
\item[$(\subseteq)$] It is an easy computation with the polar form of $q_\sigma$. Let $[\alpha_{2, 0} + \alpha_{0, 2}] \in H^{2, 0}_{\delbar}(X) \oplus H^{0, 2}_{\delbar}(X)$, we need to prove that for any $[\alpha_{1, 1}] \in H^{1, 1}_{\delbar}(X)$ we have
\[\langle [\alpha_{2, 0} + \alpha_{0, 2}], [\alpha_{1, 1}] \rangle_\sigma = \langle [\alpha_{2, 0}], [\alpha_{1, 1}] \rangle_\sigma + \langle [\alpha_{0, 2}], [\alpha_{1, 1}] \rangle_\sigma = 0.\]
It is only a matter of type that
\[q_\sigma([\alpha_{2, 0}]) = q_\sigma([\alpha_{0, 2}]) = 0,\]
from which we deduce the equality
\[2 \langle [\alpha_{2, 0}], [\alpha_{1, 1}] \rangle_\sigma = q_\sigma([\alpha_{2, 0}] + [\alpha_{1, 1}]) - q_\sigma([\alpha_{2, 0}]) - q_\sigma([\alpha_{1, 1}]) = q_\sigma([\alpha_{1, 1}]) - q_\sigma([\alpha_{1, 1}]) = 0,\]
and similarly $\langle [\alpha_{0, 2}], [\alpha_{1, 1}] \rangle_\sigma = 0$. So we are done.
\item[$(=)$] We denote $b_i(X) = \dim H^i(X, \C)$ and $h^{p, q}(X) = \dim H^{p, q}_{\delbar}(X)$ as usual. By Lemma \ref{lemma: non-degenerate}, $\langle \blank, \blank \rangle_\sigma$ is non degenerate on $H^{1, 1}_{\delbar}(X)$, which is equivalent to $H^{1, 1}_{\delbar}(X) \cap H^{1, 1}_{\delbar}(X)^\perp = \{ 0 \}$. In turn this implies that $\dim H^{1, 1}_{\delbar}(X)^\perp = b_2(X) - h^{1, 1}(X)$, and finally that
\[\dim H^{1, 1}_{\delbar}(X)^\perp = b_2(X) - h^{1, 1}(X) = h^{2, 0}(X) + h^{0, 2}(X).\]
\end{itemize}
\end{proof}

Now we study the restriction of the quadratic form on $H^{2, 0}_{\delbar}(X) \oplus H^{0, 2}_{\delbar}(X)$.

\begin{lemma}\label{lemma: orthogonal condition}
Let $(X, \sigma)$ be a complex symplectic manifold satisfying the $\del \delbar$-lemma, and let $V = \Span\{ [\sigma], [\bar{\sigma}] \}$. Let $[\tau_{2, 0}] + [\tau_{0, 2}] \in H^{2, 0}_{\delbar}(X) \oplus H^{0, 2}_{\delbar}(X)$. Then $[\tau_{2, 0}] + [\tau_{0, 2}] \in V^\perp$ if and only if
\[\int_X \sigma^n \bar{\sigma}^{n - 1} \tau_{0, 2} = \int_X \sigma^{n - 1} \bar{\sigma}^n \tau_{2, 0} = 0.\]
\end{lemma}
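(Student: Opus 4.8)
The plan is to unwind the definition of $V^\perp$ and evaluate the four pairings $\langle [\tau_{2,0}], [\sigma]\rangle_\sigma$, $\langle [\tau_{2,0}], [\bar\sigma]\rangle_\sigma$, $\langle [\tau_{0,2}], [\sigma]\rangle_\sigma$, $\langle [\tau_{0,2}], [\bar\sigma]\rangle_\sigma$ directly from the polar form of $q_\sigma$. By bilinearity, $[\tau_{2,0}]+[\tau_{0,2}]\in V^\perp$ holds exactly when this class pairs to zero with both $[\sigma]$ and $[\bar\sigma]$, so the whole statement reduces to computing these four numbers. From \eqref{eq: bbf} the polar form reads, for $[\alpha],[\beta]\in H^2(X,\C)$,
\[\langle[\alpha],[\beta]\rangle_\sigma = \frac{n}{2}\int_X(\sigma\bar\sigma)^{n-1}\alpha\beta + \frac{1-n}{2}\left(\int_X\sigma^{n-1}\bar\sigma^n\alpha\int_X\sigma^n\bar\sigma^{n-1}\beta + \int_X\sigma^{n-1}\bar\sigma^n\beta\int_X\sigma^n\bar\sigma^{n-1}\alpha\right).\]

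The engine of the computation is type-counting: since $\dim_\C X = 2n$, an integral $\int_X$ vanishes unless its integrand is of type $(2n,2n)$, and as $\sigma,\bar\sigma$ are of types $(2,0),(0,2)$ every monomial appearing has a forced bidegree. Thus $\int_X\sigma^n\bar\sigma^{n-1}\tau_{2,0}$, $\int_X\sigma^{n-1}\bar\sigma^n\tau_{0,2}$, $\int_X\sigma^{n+1}\bar\sigma^{n-1}$ and their conjugate analogues all vanish, while $\int_X\sigma^n\bar\sigma^n=1$ by the normalization \eqref{eq: normalization}. Feeding the relevant forms into the displayed formula and discarding every term killed by type, the Lefschetz term $\frac n2\int_X(\sigma\bar\sigma)^{n-1}\alpha\beta$ survives only in the two mixed pairings, and in each surviving pairing the $\frac n2$ from the Lefschetz term and the $\frac{1-n}{2}$ from the rank-one correction (whose companion factor is the normalized $\int_X\sigma^n\bar\sigma^n=1$) add up to $\frac12$. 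This yields
\[\langle[\tau_{2,0}],[\sigma]\rangle_\sigma = \langle[\tau_{0,2}],[\bar\sigma]\rangle_\sigma = 0,\]
\[\langle[\tau_{2,0}],[\bar\sigma]\rangle_\sigma = \tfrac12\int_X\sigma^{n-1}\bar\sigma^n\tau_{2,0}, \qquad \langle[\tau_{0,2}],[\sigma]\rangle_\sigma = \tfrac12\int_X\sigma^n\bar\sigma^{n-1}\tau_{0,2}.\]

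Adding these according to bilinearity gives $\langle[\tau_{2,0}]+[\tau_{0,2}],[\sigma]\rangle_\sigma = \tfrac12\int_X\sigma^n\bar\sigma^{n-1}\tau_{0,2}$ and $\langle[\tau_{2,0}]+[\tau_{0,2}],[\bar\sigma]\rangle_\sigma = \tfrac12\int_X\sigma^{n-1}\bar\sigma^n\tau_{2,0}$, so orthogonality to $V=\Span\{[\sigma],[\bar\sigma]\}$ is equivalent to the simultaneous vanishing of the two stated integrals, which is the assertion. The computation is otherwise routine; the only places that demand care are the systematic type-bookkeeping that selects exactly which of the many integrals survive, and tracking the coefficients so that $\frac n2$ and $\frac{1-n}{2}$ collapse to $\frac12$. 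The hypothesis that $X$ satisfies the $\del\delbar$-lemma enters only to ensure that $[\tau_{2,0}],[\tau_{0,2}]$ genuinely live in $H^2(X,\C)$ via the decomposition $H^2(X,\C)=H^{2,0}_{\delbar}(X)\oplus H^{1,1}_{\delbar}(X)\oplus H^{0,2}_{\delbar}(X)$, so that $q_\sigma$ and its polar form apply to them.
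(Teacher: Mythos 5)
Your proof is correct and follows essentially the same route as the paper: both arguments reduce orthogonality to $V$ to the polarization of $q_\sigma$ and then kill all but the two stated integrals by type-counting together with the normalization $\int_X(\sigma\bar\sigma)^n=1$. The only cosmetic difference is that you write out the polar form $\langle\cdot,\cdot\rangle_\sigma$ explicitly and evaluate the four pairings, whereas the paper computes the differences $q_\sigma([\sigma]+[\tau_{2,0}]+[\tau_{0,2}])-q_\sigma([\tau_{2,0}]+[\tau_{0,2}])$ (using $q_\sigma([\sigma])=0$) and leaves the same type bookkeeping implicit.
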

\begin{proof}
We observe that $2 \langle [\sigma], [\tau_{2, 0}] + [\tau_{0, 2}] \rangle_\sigma = 0$ if and only if $q_\sigma([\sigma] + [\tau_{2, 0}] + [\tau_{0, 2}]) - q_\sigma([\tau_{2, 0}] + [\tau_{0, 2}]) = 0$; a straightforward computation shows that this last is equivalent to say that
\[\int_X \sigma^n \bar{\sigma}^{n - 1} \tau_{0, 2} = 0.\]
In the same way, one sees that $2 \langle [\bar{\sigma}], [\tau_{2, 0}] + [\tau_{0, 2}] \rangle_\sigma = 0$ if and only if $\int_X \sigma^{n - 1} \bar{\sigma}^n \tau_{2, 0} = 0$, and the result then follows.
\end{proof}

Observe that we have two natural linear functionals on $H^{2, 0}_{\delbar}(X)$ and $H^{0, 2}_{\delbar}(X)$ respectively:
\[\begin{array}{ccc}
H^{2, 0}_{\delbar}(X) & \longrightarrow & \C\\
{[\tau_{2, 0}]} & \longmapsto & \int_X \sigma^{n - 1} \bar{\sigma}^n \tau_{2, 0}
\end{array}, \qquad \begin{array}{ccc}
H^{0, 2}_{\delbar}(X) & \longrightarrow & \C\\
{[\tau_{0, 2}]} & \longmapsto & \int_X \sigma^n \bar{\sigma}^{n - 1} \tau_{0, 2}
\end{array}\]
These functionals are surjective since their value on $[\sigma]$, resp.\ $[\bar{\sigma}]$, is $1$, and so they have $(d - 1)$-dimensional kernel, where $d = h^{2, 0}(X) = h^{0, 2}(X)$. In particular, after fixing a basis $\{ [\sigma_2], \ldots, [\sigma_d] \}$ for the kernel of the first, we have
\[H^{2, 0}_{\delbar}(X) \oplus H^{0, 2}_{\delbar}(X) = \underbrace{\Span \{ \sigma, \bar{\sigma} \}}_{V} \oplus \underbrace{\Span \{ \sigma_2, \ldots, \sigma_d, \bar{\sigma}_2, \ldots, \bar{\sigma}_d \}}_{W},\]
where $W$ is the orthogonal complement of $V$ in $H^{2, 0}_{\delbar}(X) \oplus H^{0, 2}_{\delbar}(X)$ by Lemma \ref{lemma: orthogonal condition}.

We now want to show that $W$ is the subspace where the form $q_\sigma$ degenerates.

\begin{proposition}\label{prop: decomposition}
Let $(X, \sigma)$ be a complex symplectic manigold, and assume that $X$ satisfies the $\del \delbar$-lemma. Then the second complex cohomology group of $X$ has an orthogonal decomposition for $q_\sigma$
\[H^2(X, \C) = \Span\{ [\sigma], [\bar{\sigma}] \} \oplus W \oplus H^{1, 1}_{\delbar}(X),\]
where $W \subseteq H^{2, 0}_{\delbar}(X) \oplus H^{0, 2}_{\delbar}(X)$ is the kernel of $\langle \blank, \blank \rangle_\sigma$.
\end{proposition}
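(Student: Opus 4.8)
The plan is to establish the claimed orthogonal decomposition
\[H^2(X, \C) = \Span\{ [\sigma], [\bar{\sigma}] \} \oplus W \oplus H^{1, 1}_{\delbar}(X)\]
by first fixing the direct-sum decomposition as vector spaces and then checking pairwise orthogonality for $q_\sigma$, finally identifying $W$ as the kernel of $\langle \blank, \blank \rangle_\sigma$. The underlying vector-space decomposition is already in hand: since $X$ satisfies the $\del \delbar$-lemma, we have $H^2(X, \C) = H^{2, 0}_{\delbar}(X) \oplus H^{1, 1}_{\delbar}(X) \oplus H^{0, 2}_{\delbar}(X)$, and the discussion preceding the statement splits the outer summands as $H^{2, 0}_{\delbar}(X) \oplus H^{0, 2}_{\delbar}(X) = V \oplus W$ with $V = \Span\{[\sigma], [\bar{\sigma}]\}$ and $W = V^\perp$ inside $H^{2, 0}_{\delbar}(X) \oplus H^{0, 2}_{\delbar}(X)$. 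So as vector spaces the decomposition is immediate.

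Next I would verify the three orthogonality relations. The relation $V \perp W$ is true by the very definition of $W$ as the orthogonal complement of $V$, via Lemma \ref{lemma: orthogonal condition}. The orthogonality of $H^{1, 1}_{\delbar}(X)$ to the full space $H^{2, 0}_{\delbar}(X) \oplus H^{0, 2}_{\delbar}(X)$ is exactly Proposition \ref{prop: (1, 1) perp}, which gives $H^{2, 0}_{\delbar}(X) \oplus H^{0, 2}_{\delbar}(X) = H^{1, 1}_{\delbar}(X)^\perp$; in particular $H^{1, 1}_{\delbar}(X)$ is orthogonal to both $V$ and $W$. Thus all three summands are mutually orthogonal, and the decomposition is orthogonal for $q_\sigma$ as claimed.

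It remains to identify $W$ with the kernel of $\langle \blank, \blank \rangle_\sigma$, which I expect to be the main point. Since the decomposition is orthogonal, the radical of $q_\sigma$ is the direct sum of the radicals of the restrictions to the three summands. On $H^{1, 1}_{\delbar}(X)$ the form is non-degenerate by Lemma \ref{lemma: non-degenerate}, so this summand contributes nothing to the kernel. On $V$ the Gram matrix in the basis $\{[\sigma], [\bar{\sigma}]\}$ is computed directly from \eqref{eq: bbf}: using the normalization \eqref{eq: normalization} and the type restrictions $q_\sigma([\sigma]) = q_\sigma([\bar{\sigma}]) = 0$ together with $\langle [\sigma], [\bar{\sigma}] \rangle_\sigma \neq 0$, one finds $V$ is non-degenerate as well, so it contributes nothing to the radical. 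Hence the entire kernel of $\langle \blank, \blank \rangle_\sigma$ lies in $W$, and I must show the restriction to $W$ is identically degenerate, i.e.\ every element of $W$ is orthogonal to all of $H^2(X, \C)$. By orthogonality $W$ already annihilates $V$ and $H^{1, 1}_{\delbar}(X)$, so it suffices to show $W$ is totally isotropic, i.e.\ $\langle [\omega], [\omega'] \rangle_\sigma = 0$ for all $[\omega], [\omega'] \in W$. This is the computational heart: writing a general element as $[\tau_{2,0}] + [\tau_{0,2}]$ with the functionals $\int_X \sigma^{n-1}\bar\sigma^n \tau_{2,0}$ and $\int_X \sigma^n \bar\sigma^{n-1}\tau_{0,2}$ vanishing, I would expand $q_\sigma([\tau_{2,0}] + [\tau_{0,2}])$ using \eqref{eq: bbf}; the middle term $\int_X (\sigma\bar\sigma)^{n-1}(\tau_{2,0} + \tau_{0,2})^2$ vanishes by type (the only surviving bidegree would require the $(2,0)\!\cdot\!(0,2)$ cross term, which integrates against $(\sigma\bar\sigma)^{n-1}$ to top degree but is handled by the isotropy coming from the kernel conditions), and the two rank-one correction terms vanish precisely because $[\tau_{2,0}], [\tau_{0,2}] \in W = V^\perp$ kills the defining functionals. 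Thus $q_\sigma$ vanishes identically on $W$, $W$ is the kernel of $\langle \blank, \blank \rangle_\sigma$, and the proof is complete.
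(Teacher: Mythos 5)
Your setup is sound and in fact mirrors the paper's reduction: Lemma \ref{lemma: non-degenerate} and Proposition \ref{prop: (1, 1) perp} give the mutual orthogonality of the three summands, the form is non-degenerate on $V = \Span\{[\sigma], [\bar{\sigma}]\}$ (a hyperbolic plane, since $q_\sigma([\sigma]) = q_\sigma([\bar{\sigma}]) = 0$ and $\langle [\sigma], [\bar{\sigma}] \rangle_\sigma = \tfrac12$) and on $H^{1, 1}_{\delbar}(X)$, so the radical must lie in $W$. The gap sits exactly at what you yourself call the computational heart. For $[\tau_{2, 0}] + [\tau_{0, 2}] \in W$, the two rank-one terms of \eqref{eq: bbf} do vanish because the defining functionals vanish, but the remaining term
\[n \int_X (\sigma \bar{\sigma})^{n - 1} \tau_{2, 0} \tau_{0, 2}\]
is of top type $(2n, 2n)$, so it does \emph{not} vanish for type reasons, and it is not one of the two integrals that the conditions $\int_X \sigma^{n - 1} \bar{\sigma}^n \tau_{2, 0} = \int_X \sigma^n \bar{\sigma}^{n - 1} \tau_{0, 2} = 0$ kill. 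Saying it is ``handled by the isotropy coming from the kernel conditions'' is circular: the total isotropy of $W$ is precisely what you are trying to prove, and no argument for this term is given.

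What is missing is the step the paper supplies via Serre duality. Since $\sigma^n$ trivializes the canonical bundle, $H^{2n, 0}_{\delbar}(X) = \C \cdot [\sigma^n]$ and $H^{0, 2n}_{\delbar}(X) = \C \cdot [\bar{\sigma}^n]$, and the pairing between them is non-degenerate (indeed $\int_X \sigma^n \bar{\sigma}^n = 1$ by the normalization \eqref{eq: normalization}). Writing $[\sigma^{n - 1} \tau_{2, 0}] = c [\sigma^n]$, the condition $\int_X (\sigma^{n - 1} \tau_{2, 0}) \bar{\sigma}^n = 0$ gives $c = 0$, i.e.\ $[\sigma^{n - 1} \tau_{2, 0}] = 0$ in $H^{2n, 0}_{\delbar}(X)$; and since there are no $\delbar$-exact $(2n, 0)$-forms, the form $\sigma^{n - 1} \tau_{2, 0}$ vanishes identically. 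Likewise $\bar{\sigma}^{n - 1} \tau_{0, 2} = 0$. Only then does the cross term $\int_X (\sigma^{n - 1} \tau_{2, 0}) \wedge (\bar{\sigma}^{n - 1} \tau_{0, 2})$ vanish, giving $q_\sigma \equiv 0$ on $W$, hence by polarization $\langle \blank, \blank \rangle_\sigma \equiv 0$ on $W$, and finally $W = \ker \langle \blank, \blank \rangle_\sigma$ by the orthogonality and non-degeneracy statements you already have. Without this Serre-duality argument, the identification of $W$ with the kernel --- the entire content of the Proposition --- remains unproved.
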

\begin{proof}
We know by Lemma \ref{lemma: non-degenerate} and Proposition \ref{prop: (1, 1) perp} that $\langle \blank, \blank \rangle_\sigma$ is non-degenerate on $H^{1, 1}_{\delbar}(X)$ and that $H^{1, 1}_{\delbar}(X)^\perp = H^{2, 0}_{\delbar}(X) \oplus H^{0, 2}_{\delbar}(X)$. Hence we can restrict our attention to $H^{2, 0}_{\delbar}(X) \oplus H^{0, 2}_{\delbar}(X)$, where the Beauville--Bogomolov--Fujiki form has expression
\[q_\sigma([\tau_{2, 0}] + [\tau_{0, 2}]) = n \int_X (\sigma \bar{\sigma})^{n - 1} \tau_{2, 0} \tau_{0, 2} + (1 - n) \left( \int_X \sigma^n \bar{\sigma}^{n - 1} \tau_{0, 2} \right) \left( \int_X \sigma^{n - 1} \bar{\sigma}^n \tau_{2, 0} \right).\]
Assume now that $[\tau_{2, 0}] + [\tau_{0, 2}] \in H^{2, 0}_{\delbar}(X) \oplus H^{0, 2}_{\delbar}(X)$ is orthogonal to $\Span\{ [\sigma], [\bar{\sigma}] \}$: by Lemma \ref{lemma: orthogonal condition} this means that
\[\left\{ \begin{array}{l}
\int_X \sigma^n \bar{\sigma}^{n - 1} \tau_{0, 2} = 0\\
\int_X \sigma^{n - 1} \bar{\sigma}^n \tau_{2, 0} = 0.
\end{array} \right.\]
Since $H^{2n, 0}_{\delbar}(X) = \C \cdot [\sigma^n]$, $H^{0, 2n}_{\delbar}(X) = \C \cdot [\bar{\sigma}^n]$ and the natural pairing is non-degenerate, we obtain that $[\bar{\sigma}^{n - 1} \tau_{0, 2}] = [\sigma^{n - 1} \tau_{2, 0}] = 0$, and consequently
\[q_\sigma([\tau_{2, 0}] + [\tau_{0, 2}]) = 0.\]
This computation shows that the bilinear form $\langle \blank, \blank \rangle_\sigma$ vanishes on $W$, and since it is non-degenerate on $\Span\{ [\sigma], [\bar{\sigma}] \}$ it follows that $W = \ker \langle \blank, \blank \rangle_\sigma$.
\end{proof}

We are now ready to prove the two Main Theorems.

\begin{proof}[Proof of Theorem \ref{thm: irreducible}]
The quadric $Q_\sigma$ is irreducible if and only if the Gram matrix of $\langle \blank, \blank, \rangle_\sigma$ has rank at least $3$. So, it suffices to show that there exist three orthogonal classes $[\alpha]_i \in H^2(X, \C)$ such that $q_\sigma([\alpha]_i) \neq 0$ if and only if $h^{1, 1}(X) > 0$. It is an easy computation that $[\sigma + \bar{\sigma}]$ and $[\sigma - \bar{\sigma}]$ are orthogonal and satisfy $q_\sigma([\sigma \pm \bar{\sigma}]) = \pm 1$. It follows then from Proposition \ref{prop: decomposition} and Lemma \ref{lemma: non-degenerate} that a third class with this property exists if and only if $h^{1, 1}(X) > 0$.
\end{proof}

\begin{proof}[Proof of Theorem \ref{thm: smooth}]
It is well known that a quadric is smooth if and only if it is defined by a non-degenerate quadratic form. From Proposition \ref{prop: decomposition} we have that $q_\sigma$ is non-degenerate if and only if (with the notations of Proposition \ref{prop: decomposition}) $W = \{ 0 \}$, i.e., if and only if $h^{2, 0}(X) = 1$.
\end{proof}

\begin{rem}
Assume that $(X, \sigma)$ is an \emph{irreducible holomorphic symplectic} manifold, i.e., that $X$ is a (compact) simply connected K\"ahler manifold such that $H^{2, 0}_{\delbar}(X) = \C \cdot [\sigma]$. Then it was shown in \cite[Th\'eor\`eme 5]{bea} that $Q_\sigma$ is smooth and irreducible. In particular, to show the irreducibility the proof uses the fact that the K\"ahler class $[\omega]$ of $X$ is orthogonal to $H^{2, 0}_{\delbar}(X) \oplus H^{0, 2}_{\delbar}(X)$ and satisfies
\[q_\sigma([\omega]) = \frac{n}{2} \int_X (\sigma \bar{\sigma})^{n - 1} \omega^2 > 0\]
by the Hodge--Riemann bilinear relations.
\end{rem}

\begin{rem}
Let $(X, \sigma)$ be a complex symplectic manifold. As $q_\sigma([\sigma]) = 0$, the point defined by $[\sigma]$ in $\PP(H^2_{dR}(X, \C))$ lies on $Q_\sigma$, and it follows from Proposition \ref{prop: decomposition} that $[\sigma]$ is always a smooth point of $Q_\sigma$.
\end{rem}

\section{Complex symplectic cones}\label{sect: cones}

In this section we want to study the locus
\[Sp(X) = \{ [\sigma] \in H^{2, 0}_{\delbar}(X) | \sigma \text{ is a complex symplectic form} \} \subseteq H^{2, 0}_{\delbar}(X).\]

Observe first of all that since there are no $\delbar$-boundaries in $\cA^{2, 0}(X)$, then we have only one element in each Dolbeault class, so
\[Sp(X) = \{ \sigma \in \cA^{2, 0}(X) | \sigma \text{ is a complex symplectic form} \} \subseteq \cZ^{2, 0}_{\delbar}(X) = \{ \varphi \in \cA^{2, 0}(X) | \delbar \varphi = 0 \}.\]

It is then easy to see from the definitions that $Sp(X)$ is a cone in $\cZ^{2, 0}_{\delbar}(X)$: if $\sigma \in Sp(X)$, then $\lambda \sigma \in Sp(X)$ for any $\lambda \in \C^*$.

\begin{proposition}
Let $X$ be a compact complex manifold satisfying the $\del \delbar$-lemma. Then the set $Sp(X)$ is open in $H^{2, 0}_{\delbar}(X)$.
\end{proposition}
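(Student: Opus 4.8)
The plan is to exhibit $Sp(X)$ as the non-vanishing locus of a single continuous scalar-valued function on the finite-dimensional vector space $H^{2, 0}_{\delbar}(X)$. If $Sp(X) = \emptyset$ the statement is trivial, so I would fix a complex symplectic form $\sigma_0 \in Sp(X)$; then $(X, \sigma_0)$ is a complex symplectic manifold and all the facts recorded earlier apply to it. First I would pin down the ambient space: as observed just before the statement, $H^{2, 0}_{\delbar}(X) = \cZ^{2, 0}_{\delbar}(X) = H^0(X, \Omega^2_X)$ is the finite-dimensional space of holomorphic $2$-forms, with its canonical vector space topology.

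The first genuine use of the $\del \delbar$-lemma would be to check that on $X$ every holomorphic $2$-form $\tau$ is automatically $d$-closed, so that non-degenerate holomorphic $2$-forms are genuinely complex symplectic. Indeed $\del \tau = d\tau$ is $d$-exact, and it is both $\del$- and $\delbar$-closed (the latter since $\delbar \del \tau = -\del \delbar \tau = 0$), so the $\del \delbar$-lemma gives $\del \tau \in \im \del \delbar$; as there are no $(2, -1)$-forms this forces $\del \tau = 0$. Consequently $Sp(X)$ coincides with the set of holomorphic $2$-forms that are non-degenerate at every point, and the only remaining condition to control is pointwise non-degeneracy.

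The second step would turn this pointwise condition into a single scalar. Since $\sigma_0^n$ is a nowhere-vanishing holomorphic section of $K_X$, the canonical bundle is trivial and $H^0(X, K_X) = H^{2n, 0}_{\delbar}(X) = \C \cdot [\sigma_0^n]$ is one-dimensional ($X$ being compact and connected). Hence for every holomorphic $2$-form $\tau$ there is a unique scalar $c(\tau) \in \C$ with $\tau^n = c(\tau) \, \sigma_0^n$, and $\tau$ is non-degenerate at a point if and only if $\tau^n$ is nonzero there; because $\sigma_0^n$ is nowhere zero this holds at every point precisely when $c(\tau) \neq 0$. Thus $Sp(X) = \{ \tau : c(\tau) \neq 0 \}$. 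Finally, pairing with $\bar{\sigma}_0^n$ and using the normalization $\int_X (\sigma_0 \bar{\sigma}_0)^n = \int_X \sigma_0^n \bar{\sigma}_0^n = 1$ yields the explicit formula $c(\tau) = \int_X \tau^n \bar{\sigma}_0^n$, a homogeneous polynomial of degree $n$ in $\tau$ and hence continuous; therefore $Sp(X) = c^{-1}(\C \setminus \{ 0 \})$ is open.

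The hard part will be the passage from the pointwise non-degeneracy condition to openness in the space of \emph{global} forms, which is the only place where compactness and the $\del \delbar$-lemma are truly needed. The triviality of $K_X$ together with compactness collapses the infinitely many pointwise conditions to the non-vanishing of the single scalar $c(\tau)$, while the $\del \delbar$-lemma is what guarantees that every non-degenerate holomorphic $2$-form is actually $d$-closed, so that $Sp(X)$ is open in all of $H^{2, 0}_{\delbar}(X)$ rather than merely in the subspace of $d$-closed forms.
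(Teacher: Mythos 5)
Your proof is correct, and after the shared first step it takes a genuinely different route from the paper's. Both arguments open identically: the $\del\delbar$-lemma forces every holomorphic ($=\delbar$-closed) $(2,0)$-form to be $d$-closed, since $\del\tau$ lies in $\ker\del\cap\ker\delbar\cap\im d=\im\del\delbar$, which vanishes in bidegree $(3,0)$ because there are no $(2,-1)$-forms; so only pointwise non-degeneracy remains to be controlled. From there the paper perturbs: fixing $\sigma\in Sp(X)$ and any direction $\tau\in\cZ^{2,0}_{\delbar}(X)$, the form $\sigma+\varepsilon\tau$ stays non-degenerate for suitable small $\varepsilon$ because non-degeneracy is an open pointwise condition and $X$ is compact. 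You instead globalize the non-degeneracy condition: using the trivialization of $K_X$ by the nowhere-vanishing section $\sigma_0^n$ (so $H^{2n,0}_{\delbar}(X)=\C\cdot[\sigma_0^n]$, a fact the paper records in its normalization remark), every holomorphic $2$-form satisfies $\tau^n=c(\tau)\,\sigma_0^n$ with $c(\tau)=\int_X\tau^n\bar{\sigma}_0^n$, and $Sp(X)=\{c\neq 0\}$; since $c$ is a homogeneous polynomial of degree $n$ on the finite-dimensional space $H^{2,0}_{\delbar}(X)$, openness is immediate. Your route yields strictly more than openness: $Sp(X)$ is the complement of an algebraic hypersurface, hence Zariski-open (so dense and connected when non-empty), and one gets for free that a holomorphic $2$-form which is non-degenerate at a single point is non-degenerate everywhere. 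The paper's perturbation argument is more elementary (no canonical-bundle input), though as literally phrased (``we can perturb $\sigma$ in any direction'') it leaves implicit the uniformity of $\varepsilon$ over directions, a finite-dimensional compactness point that your single scalar $c$ sidesteps entirely. One cosmetic remark: you never actually need the normalization $\int_X(\sigma_0\bar{\sigma}_0)^n=1$, only that this integral is non-zero --- which the paper's remark guarantees --- since one may divide by it in defining $c$.
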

\begin{proof}
%As we observed, this is the same to prove that $Sp(X)$ is open in $\cZ^{2, 0}_{\delbar}(X)$. We can also assume that $Sp(X) \neq \varnothing$. In this proof, we will make use of the following observation: the two canonical morphisms
%\[\begin{array}{ccc}
%\cZ^{2, 0}_{\delbar}(X) & \longrightarrow & H^{2, 0}_{\delbar}(X)\\
%\tau & \longmapsto & [\tau]_{\delbar}
%\end{array} \qquad \text{and} \qquad \begin{array}{ccc}
%\cZ^{2, 0}_{BC}(X) & \longrightarrow & H^{2, 0}_{BC}(X)\\
%\tau & \longmapsto & [\tau]_{BC}
%\end{array}\]
%are actually isomorphisms (where $\cZ^{2, 0}_{BC}(X) = \{ \tau \in \cA^{2, 0}(X) | d\tau = 0 \}$) since there are on $X$ no $(2, -1)$-forms nor $(1, -1)$-forms respectively. Combining them with the isomorphism \eqref{eq: del-delbar iso} provided by the $\del \delbar$-lemma, we have that
%\[\begin{array}{ccccccc}
%\cZ^{2, 0}_{BC}(X) & \longrightarrow & H^{2, 0}_{BC} & \longrightarrow & H^{2, 0}_{\delbar} & \longrightarrow & \cZ^{2, 0}_{\delbar}(X)\\
%\tau & \longmapsto & [\tau]_{BC} & \longmapsto & [\tau]_{\delbar} & \longmapsto & \tau
%\end{array}\]
%is an isomorphism, which means that any $\delbar$-closed $(2, 0)$-form on $X$ is actually $d$-closed.
Observe that if $\tau$ is a $\delbar$-closed $(2, 0)$-form, then $d \tau = 0$. In fact $d\tau = \del\tau$ is of type $(3, 0)$, and as $d\tau \in \ker \del \cap \ker \delbar \cap \im d = \im \del \delbar$ we deduce that $d\tau = 0$ because there are on $X$ no $(2, -1)$-forms. Let then $\sigma \in Sp(X)$ be fixed, and let $\tau \in \cZ^{2, 0}_{\delbar}(X)$ be any $(2, 0)$-form. Consider then the form $\sigma + \varepsilon \tau$, with $\varepsilon > 0$: this is
\begin{enumerate}
\item $\delbar$-closed, since both $\sigma$ and $\tau$ are;
\item $d$-closed, since both $\sigma$ and $\tau$ are;
\item non-degenerate for some suitable $\varepsilon > 0$, since $\sigma$ is non-degenerate and $X$ is compact.
\end{enumerate}
So we can perturb $\sigma \in Sp(X)$ in any direction remaining in $Sp(X)$, which implies that $Sp(X)$ is open.
\end{proof}

\section{Examples}\label{sect: examples}

As we observed in Corollary \ref{cor: symplectic lefschetz}, the map
\[\begin{array}{rccc}
L_\sigma^{n - 1}: & H^{1, q}_{\delbar}(X) & \longrightarrow & H^{2n - 1, q}_{\delbar}(X)\\
 & [\varphi]_{\delbar} & \longmapsto & [\sigma^{n - 1} \varphi]_{\delbar}
\end{array}\]
is an isomorphism.

In Example \ref{ex: no iso del delbar} we will show that this is not true if we consider Bott--Chern cohomology instead of Dolbeault cohomology, and that if $X$ does not satisfy the $\del \delbar$-lemma, then its conjugate $L_{\bar{\sigma}}^{n - 1}: H^{q, 1}_{\delbar}(X) \longrightarrow H^{q, 2n - 1}_{\delbar}(X)$ may fail to be an isomorphism. Finally, in Example \ref{ex: deformations} we want to deal with an explicit example that clarifies our results, in particular Theorems \ref{thm: irreducible} and \ref{thm: smooth}.

\begin{example}\label{ex: no iso del delbar}
Let $\I(3) = \Gamma \backslash G$ be the Iwasawa threefold, i.e., the quotient of the complex nilpotent lie group
\[G = \left\{ \left( \begin{array}{ccc}
1 & z_1 & z_3\\
0 & 1 & z_2\\
0 & 0 & 1
\end{array} \right) \,\,\middle|\,\, z_1, z_2, z_3 \in C \right\}\]
by the lattice $\Gamma$ consisting of the matrices with entries in $\Z[\sqrt{-1}]$. We have then on $\I(3)$ the following global $(1, 0)$-forms, expressed in terms of the natural coordinates $(z_1, z_2, z_3)$:
\[\varphi_1 = dz_1, \qquad \varphi_2 = dz_2, \qquad \varphi_3 = dz_3 - z_1 dz_2,\]
and which satisfy the structure equations
\[d\varphi_1 = 0, \qquad d\varphi_2 = 0, \qquad d\varphi_3 = -\varphi_1 \wedge \varphi_2.\]
We consider then $X = \I(3) \times \T$, where $\T$ is a complex $1$-dimensional torus with coordinate $z_4$ giving us a fourth $(1, 0)$-form $\varphi_4 = dz_4$ satisfying the structure equation $d\varphi_4 = 0$. Observe that $X$ is a $4$-dimensional complex manifold which does not satisfy the $\del \delbar$-lemma as $\I(3)$ does not. In the sequel we will use the following notation:
\[\varphi_{ij} = \varphi_i \wedge \varphi_j, \qquad \varphi_{i\bar{j}} = \varphi_i \wedge \bar{\varphi}_j, \qquad \ldots.\]
With some computations we can see that the second cohomologies of $X$ have the following generators: the de Rham cohomology is
\begin{equation}\label{eq: h^2 de rham}
H^2_{dR}(X, \C) = \Span \left\{ \begin{array}{c}
{[\varphi_{13}]_{dR}}, [\varphi_{14}]_{dR}, [\varphi_{23}]_{dR}, [\varphi_{24}]_{dR},\\
{[\varphi_{1\bar{1}}]_{dR}}, [\varphi_{1\bar{2}}]_{dR}, [\varphi_{1\bar{4}}]_{dR},\\
{[\varphi_{2\bar{1}}]_{dR}}, [\varphi_{2\bar{2}}]_{dR}, [\varphi_{2\bar{4}}]_{dR},\\
{[\varphi_{4\bar{1}}]_{dR}}, [\varphi_{4\bar{2}}]_{dR}, [\varphi_{4\bar{4}}]_{dR},\\
{[\varphi_{\bar{1}\bar{3}}]_{dR}}, [\varphi_{\bar{1}\bar{4}}]_{dR}, [\varphi_{\bar{2}\bar{3}}]_{dR}, [\varphi_{\bar{2}\bar{4}}]_{dR}
\end{array} \right\};
\end{equation}
the Bott--Chern cohomology is
\[\begin{array}{l}
H^{2, 0}_{BC}(X) = \Span \left\{ \begin{array}{c}
{[\varphi_{12}]_{BC}}, [\varphi_{13}]_{BC}, [\varphi_{14}]_{BC}, [\varphi_{23}]_{BC}, [\varphi_{24}]_{BC}
\end{array} \right\},\\
H^{1, 1}_{BC}(X) = \Span \left\{ \begin{array}{c}
{[\varphi_{1\bar{1}}]_{BC}}, [\varphi_{1\bar{2}}]_{BC}, [\varphi_{1\bar{4}}]_{BC},\\
{[\varphi_{2\bar{1}}]_{BC}}, [\varphi_{2\bar{2}}]_{BC}, [\varphi_{2\bar{4}}]_{BC},\\
{[\varphi_{4\bar{1}}]_{BC}}, [\varphi_{4\bar{2}}]_{BC}, [\varphi_{4\bar{4}}]_{BC}
\end{array} \right\},\\
H^{0, 2}_{BC}(X) = \Span \left\{ \begin{array}{c}
{[\varphi_{\bar{1}\bar{2}}]_{BC}}, [\varphi_{\bar{1}\bar{3}}]_{BC}, [\varphi_{\bar{1}\bar{4}}]_{BC}, [\varphi_{\bar{2}\bar{3}}]_{BC}, [\varphi_{\bar{2}\bar{4}}]_{BC}
\end{array} \right\};
\end{array}\]
and finally the Dolbeault cohomology is
\[\begin{array}{l}
H^{2, 0}_{\delbar}(X) = \Span \left\{ \begin{array}{c}
{[\varphi_{12}]_{\delbar}}, [\varphi_{13}]_{\delbar}, [\varphi_{14}]_{\delbar}, [\varphi_{23}]_{\delbar}, [\varphi_{24}]_{\delbar}, [\varphi_{34}]_{\delbar}
\end{array} \right\},\\
H^{1, 1}_{\delbar}(X) = \Span \left\{ \begin{array}{c}
{[\varphi_{1\bar{1}}]_{\delbar}}, [\varphi_{1\bar{2}}]_{\delbar}, [\varphi_{1\bar{4}}]_{\delbar},\\
{[\varphi_{2\bar{1}}]_{\delbar}}, [\varphi_{2\bar{2}}]_{\delbar}, [\varphi_{2\bar{4}}]_{\delbar},\\
{[\varphi_{3\bar{1}}]_{\delbar}}, [\varphi_{3\bar{2}}]_{\delbar}, [\varphi_{3\bar{4}}]_{\delbar},\\
{[\varphi_{4\bar{1}}]_{\delbar}}, [\varphi_{4\bar{2}}]_{\delbar}, [\varphi_{4\bar{4}}]_{\delbar}
\end{array} \right\},\\
H^{0, 2}_{\delbar}(X) = \Span \left\{ \begin{array}{c}
{[\varphi_{\bar{1}\bar{3}}]_{\delbar}}, [\varphi_{\bar{1}\bar{4}}]_{\delbar}, [\varphi_{\bar{2}\bar{3}}]_{\delbar}, [\varphi_{\bar{2}\bar{4}}]_{\delbar}
\end{array} \right\}.
\end{array}\]
For later purposes, we list here the generators for other cohomology groups:
\[\begin{array}{l}
H^{3, 1}_{\delbar}(X) = \Span \left\{ \begin{array}{c}
{[\varphi_{123\bar{1}}]_{\delbar}}, [\varphi_{123\bar{2}}]_{\delbar}, [\varphi_{123\bar{4}}]_{\delbar},\\
{[\varphi_{124\bar{1}}]_{\delbar}}, [\varphi_{124\bar{2}}]_{\delbar}, [\varphi_{124\bar{4}}]_{\delbar},\\
{[\varphi_{134\bar{1}}]_{\delbar}}, [\varphi_{134\bar{2}}]_{\delbar}, [\varphi_{134\bar{4}}]_{\delbar},\\
{[\varphi_{234\bar{1}}]_{\delbar}}, [\varphi_{234\bar{2}}]_{\delbar}, [\varphi_{234\bar{4}}]_{\delbar}
\end{array} \right\},\\
H^{3, 1}_{BC}(X) = \Span \left\{ \begin{array}{c}
{[\varphi_{123\bar{1}}]_{BC}}, [\varphi_{123\bar{2}}]_{BC}, [\varphi_{123\bar{4}}]_{BC},\\
{[\varphi_{124\bar{1}}]_{BC}}, [\varphi_{124\bar{2}}]_{BC}, [\varphi_{124\bar{4}}]_{BC},\\
{[\varphi_{134\bar{1}}]_{BC}}, [\varphi_{134\bar{2}}]_{BC}, [\varphi_{134\bar{4}}]_{BC},\\
{[\varphi_{234\bar{1}}]_{BC}}, [\varphi_{234\bar{2}}]_{BC}, [\varphi_{234\bar{4}}]_{BC}
\end{array} \right\},\\
H^{1, 3}_{\delbar}(X) = \Span \left\{ \begin{array}{c}
{[\varphi_{1\bar{1}\bar{2}\bar{3}}]_{\delbar}}, [\varphi_{1\bar{1}\bar{3}\bar{4}}]_{\delbar}, [\varphi_{1\bar{2}\bar{3}\bar{4}}]_{\delbar},\\
{[\varphi_{2\bar{1}\bar{2}\bar{3}}]_{\delbar}}, [\varphi_{2\bar{1}\bar{3}\bar{4}}]_{\delbar}, [\varphi_{2\bar{2}\bar{3}\bar{4}}]_{\delbar},\\
{[\varphi_{3\bar{1}\bar{2}\bar{3}}]_{\delbar}}, [\varphi_{3\bar{1}\bar{3}\bar{4}}]_{\delbar}, [\varphi_{3\bar{2}\bar{3}\bar{4}}]_{\delbar},\\
{[\varphi_{4\bar{1}\bar{2}\bar{3}}]_{\delbar}}, [\varphi_{4\bar{1}\bar{3}\bar{4}}]_{\delbar}, [\varphi_{4\bar{2}\bar{3}\bar{4}}]_{\delbar}
\end{array} \right\}
\end{array}\]
(see also \cite{angella-kasuya} for general computations and \cite{C-T} for Dolbeault formality).

We now focus on the complex symplectic forms on $X$. Let $\sigma$ be any $d$-closed form of type $(2, 0)$ on $X$: according to \eqref{eq: h^2 de rham} it is cohomologus to
\[\alpha \varphi_{12} + \beta \varphi_{13} + \gamma \varphi_{14} + \delta \varphi_{23} + \varepsilon \varphi_{24},\]
and it is easy to see by taking its square that such a form is non-degenerate (hence a symplectic form) if and only if
\[\beta \varepsilon - \gamma \delta \neq 0.\]
We now fix such a symplectic form $\sigma$, and we also assume that it satisfies the normalization
\[\int_X (\sigma \bar{\sigma})^2 = 4 |\beta \varepsilon - \gamma \delta|^2 \int_X \varphi_{1234\bar{1}\bar{2}\bar{3}\bar{4}} = 1.\]
It is then only a matter of computation with the generators given above that
\begin{enumerate}
\item the Lefschetz operator $L_\sigma: H^{1, 1}_{\delbar} \longrightarrow H^{3, 1}_{\delbar}(X)$ is an isomorphism (as stated in Corollary \ref{cor: symplectic lefschetz}), while on the Bott--Chern cohomology it defines only an injective homomorphism $L_\sigma: H^{1, 1}_{BC} \longrightarrow H^{3, 1}_{BC}(X)$;
\item the Lefschetz operator induced by $\bar{\sigma}$ is not an isomorphism, since
\[\ker (L_{\bar{\sigma}}: H^{1, 1}_{\delbar} \longrightarrow H^{1, 3}_{\delbar}(X)) = \Span \left\{
\begin{array}{c}
{[}\bar{\beta} \varphi_{1\bar{1}} + \bar{\delta} \varphi_{1\bar{2}}]_{\delbar}, [\bar{\beta} \varphi_{2\bar{1}} + \bar{\delta} \varphi_{2\bar{2}}]_{\delbar},\\
{[}\bar{\beta} \varphi_{3\bar{1}} + \bar{\delta} \varphi_{3\bar{2}}]_{\delbar}, [\bar{\beta} \varphi_{4\bar{1}} + \bar{\delta} \varphi_{4\bar{2}}]_{\delbar}
\end{array} \right\},\]
and so the hypothesis that $X$ satisfies the $\del \delbar$-lemma is necessary in Lemma \ref{lemma: iso (1, 1)};
\item the images under the canonical maps $H^{p, q}_{BC}(X) \longrightarrow H^{p + q}_{dR}(X, \C)$ of $H^{2, 0}_{BC}(X) \oplus H^{0, 2}_{BC}(X)$ and $H^{1, 1}_{BC}(X)$ are orthogonal with respect to $\langle \blank, \blank \rangle_\sigma$, and the form $q_\sigma$ is \emph{degenerate on both} of them.
\end{enumerate}
\end{example}

\begin{example}\label{ex: deformations}
Let $t \in \C \smallsetminus \{ 0 \}$ and consider the manifold $X_t$ which is the product of a deformation of the holomorphic parallelizable Nakamura threefold and a complex $1$-dimensional torus. The deformations of the Nakamura threefold we are considering were analysed in \cite{angella-kasuya}, where it is shown that they satisfy the $\del \delbar$-lemma: in terms of the natural coordinates $(z_1, z_2, z_3)$ on the threefold and $z_4$ on the torus the  manifold $X_t$ is described by
\[\begin{array}{llcll}
(1, 0)-\text{forms:} & \varphi_1 = dz_1 - t d\bar{z}_1 & \quad & (0, 1)-\text{forms:} & \omega_1 = d\bar{z}_1 - \bar{t} dz_1\\
 & \varphi_2 = e^{-z_1} dz_2 & \quad & & \omega_2 = e^{-z_1} d\bar{z}_2\\
 & \varphi_3 = e^{z_1} dz_3 & \quad & & \omega_3 = e^{z_1} d\bar{z}_3\\
 & \varphi_4 = dz_4 & \quad & & \omega_4 = d\bar{z}_4,
\end{array}\]
with structure equations
\[\begin{array}{lcl}
d\varphi_1 = 0 & \quad & d\omega_1 = 0\\
d\varphi_2 = -\frac{1}{1 - |t|^2} \varphi_1 \wedge \varphi_2 + \frac{t}{1 - |t|^2} \varphi_2 \wedge \omega_1 & \quad & d\omega_2 = -\frac{1}{1 - |t|^2} \varphi_1 \wedge \omega_2 - \frac{t}{1 - |t|^2} \omega_1 \wedge \omega_2\\
d\varphi_3 = \frac{1}{1 - |t|^2} \varphi_1 \wedge \varphi_3 - \frac{t}{1 - |t|^2} \varphi_3 \wedge \omega_1 & \quad & d\omega_3 = \frac{1}{1 - |t|^2} \varphi_1 \wedge \omega_3 + \frac{t}{1 - |t|^2} \omega_1 \wedge \omega_3\\
d\varphi_4 = 0 & \quad & d\omega_4 = 0.
\end{array}\]
We observe that among all the $(2, 0)$-forms, the one which are $d$-closed are those of the form
\[\alpha \varphi_{14} + \beta \varphi_{23}, \qquad \alpha, \beta \in \C\]
and such forms are non-degenerate if and only if $\alpha, \beta \in \C^*$. This shows that $Sp(X_t) \subseteq H^{2, 0}_{\delbar}(X_t)$ is nothing but $(\C^*)^2 \subseteq \C^2$, which is clearly an open cone. It is then possible to compute explicitly the cohomology of our manifold, and in particular its second cohomology spaces:
\begin{equation}\label{eq: h^2}
\begin{array}{l}
H^{2, 0}_{\delbar}(X_t) = \Span \{ [\varphi_{14}], [\varphi_{23}] \},\\
H^{1, 1}_{\delbar}(X_t) = \Span \{ [\varphi_1 \wedge \omega_1], [\varphi_1 \wedge \omega_4], [\varphi_2 \wedge \omega_3], [\varphi_3 \wedge \omega_2], [\varphi_4 \wedge \omega_1], [\varphi_4 \wedge \omega_4] \},\\
H^{0, 2}_{\delbar}(X_t) = \Span \{ [\omega_{14}], [\omega_{23}] \}.
\end{array}
\end{equation}
As a consequence of the $\del \delbar$-lemma, each Dolbeault class has a representative which is $d$-closed, and in \eqref{eq: h^2} we used such representatives. We can also describe the action of complex conjugation:
\[\begin{array}{ccccc}
\overline{[\varphi_{14}]} = [\omega_{14}] & & \overline{[\varphi_1 \wedge \omega_1]} = -[\varphi_1 \wedge \omega_1] & & \overline{[\varphi_1 \wedge \omega_4]} = -[\varphi_4 \wedge \omega_1]\\
\overline{[\varphi_{23}]} = [\omega_{23}] & & \overline{[\varphi_2 \wedge \omega_3]} = -[\varphi_3 \wedge \omega_2] & & \overline{[\varphi_4 \wedge \omega_4]} = -[\varphi_4 \wedge \omega_4].
\end{array}\]
Fix now a complex symplectic form $\sigma = \alpha \varphi_{14} + \beta \varphi_{23}$, with $\alpha, \beta \in \C^*$, and let $\bar{\sigma} = \bar{\alpha} \omega_{14} + \bar{\beta} \omega_{23}$ be its complex conjugate. By our normalization assumption \eqref{eq: normalization}, we will assume that
\[\int_{X_t} (\sigma \bar{\sigma})^2 = \int_X 4 |\alpha|^2 |\beta|^2 \varphi_{1234} \wedge \omega_{1234} = 1.\]
Writing now
\[\begin{array}{rl}
[\alpha] = & a_{14} [\varphi_{14}] + a_{23} [\varphi_{23}] + \\
+ & b_{11} [\varphi_1 \wedge \omega_1] + b_{14} [\varphi_1 \wedge \omega_4] + b_{23} [\varphi_2 \wedge \omega_3] + b_{32} [\varphi_3 \wedge \omega_2] + b_{41} [\varphi_4 \wedge \omega_1] + b_{44} [\varphi_4 \wedge \omega_4] + \\
+ & c_{14} [\omega_{14}] + c_{23} [\omega_{23}]
\end{array}\]
for the generic class $[\alpha] \in H^2(X, \C)$, we can compute explicitly $q_\sigma([\alpha])$. Using on $H^2(X, \C)$ the ordered set of coordinates $(a_{14}, a_{23}, b_{11}, b_{14}, b_{23}, b_{32}, b_{41}, b_{44}, c_{14}, c_{23})$ we have just introduced, the Beauville--Bogomolov--Fujiki form on $X_t$ is described by the matrix
\[\left( \begin{array}{cc|cccccc|cc}
0 & 0 & 0 & 0 & 0 & 0 & 0 & 0 & \frac{1}{8 |\alpha|^2} & \frac{1}{8 \alpha \bar{\beta}}\\
0 & 0 & 0 & 0 & 0 & 0 & 0 & 0 & \frac{1}{8 \beta \bar{\alpha}} & \frac{1}{8 |\beta|^2}\\
\hline
0 & 0 & 0 & 0 & 0 & 0 & 0 & -\frac{1}{4 |\alpha|^2} & 0 & 0\\
0 & 0 & 0 & 0 & 0 & 0 & \frac{1}{4 |\alpha|^2} & 0 & 0 & 0\\
0 & 0 & 0 & 0 & 0 & \frac{1}{4 |\beta|^2} & 0 & 0 & 0 & 0\\
0 & 0 & 0 & 0 & \frac{1}{4 |\beta|^2} & 0 & 0 & 0 & 0 & 0\\
0 & 0 & 0 & \frac{1}{4 |\alpha|^2} & 0 & 0 & 0 & 0 & 0 & 0\\
0 & 0 & -\frac{1}{4 |\alpha|^2} & 0 & 0 & 0 & 0 & 0 & 0 & 0\\
\hline
\frac{1}{8 |\alpha|^2} & \frac{1}{8 \beta \bar{\alpha}} & 0 & 0 & 0 & 0 & 0 & 0 & 0 & 0\\
\frac{1}{8 \alpha \bar{\beta}} & \frac{1}{8 |\beta|^2} & 0 & 0 & 0 & 0 & 0 & 0 & 0 & 0\\
\end{array} \right),\]
from which it is possible to observe that
\begin{enumerate}
\item the form is degenerate, and its kernel is $\Span \{ \alpha [\varphi_{14}] - \beta [\varphi_{23}], \bar{\alpha} [\omega_{14}] - \bar{\beta} [\omega_{23}] \}$,
\item the signature of this form is $(p_+, p_-, p_0) = (4, 4, 2)$, and its restriction to $H^{1, 1}_{\delbar}(X_t)$ is non-degenerate of signature $(p_+, p_-) = (3, 3)$.
\end{enumerate}
According to Theorem \ref{thm: irreducible} and Theorem \ref{thm: smooth}, we see that the quadric defined in $\PP^9$ by this matrix is irreducible and singular.
\end{example}

\end{document}